\documentclass[12pt, a4paper]{article}

\usepackage{pdfsync}
\usepackage{amsmath}\multlinegap=0pt
\usepackage[latin1]{inputenc}
\usepackage{amsthm}
\usepackage{amssymb}
\usepackage[francais,english]{babel}

\usepackage{hyperref}
\usepackage{graphicx}
\usepackage{blindtext}
\usepackage{enumitem}
\usepackage{mathtools}
\usepackage{color}

\numberwithin{equation}{section}
\theoremstyle{plain}
\newtheorem{thm}{Theorem}[section]

\newtheorem{coro}[thm]{Corollary}
\newtheorem{prop}[thm]{Proposition}
\newtheorem{lem}[thm]{Lemma}
\newtheorem{defi}[thm]{Definition}

\theoremstyle{definition}

\theoremstyle{remark}
\newtheorem{rem}[thm]{Remark}


\newcommand{\R}{\mathbb{R}}

\newcommand{\W}{\mathcal{W}}
\newcommand{\J}{\mathcal{J}}
\newcommand{\KK}{\mathcal{K}}

\newcommand\pref[1]{(\ref{#1})}

\let \eps\varepsilon

\newcommand\CC{{\cal C}}

\newcommand\A{{\cal A}}

\DeclareMathOperator{\argmax}{argmax}

\def\H {\operatorname{H}}

\newcommand\Ent{\mathop{\mathrm{Ent}}\nolimits}

\newcommand\dive{\mathrm{div}}

\newcommand\FP{\mathrm{FP}}

\def\<#1,#2>{\left<#1,#2\right>}
\let\bar\overline



\newcommand\tilR{\widetilde{R}}
\newcommand\tilQ{\widetilde{Q}}
\newcommand\tilmu{\widetilde{\mu}}

\newcommand\Leb{{\cal L}}

\newcommand\E{{\cal E}}

\def\PP{{\cal P}}
\def\SS{{\cal S}}
\def\CC{{\cal C}}

\newcommand{\TabThree}[1]{ %
\begin{tabular}{@{}c@{\hspace{0.5mm}}c@{\hspace{0.5mm}}c@{}}
 #1
\end{tabular}
}

\title{An entropy minimization approach to  second-order variational mean-field games}

\author{Jean-David Benamou \thanks{INRIA-Paris, MOKAPLAN,  rue Simone Iff, 75012, Paris, FRANCE and CEREMADE
\texttt{Jean-David.Benamou@inria.fr.}}
\and
Guillaume Carlier\thanks{CEREMADE, UMR CNRS 7534, Universit\'e Paris IX
Dauphine, Pl. de Lattre de Tassigny, 75775 Paris Cedex 16, FRANCE and INRIA-Paris, MOKAPLAN
\texttt{carlier@ceremade.dauphine.fr}}
\and
Simone Di Marino \thanks{Indam, Unit\`a SNS, Pisa, Italy
\texttt{simone.dimarino@altamatematica.it}}
\and 
Luca Nenna \thanks{CNRS and CEREMADE 
\texttt{nenna@ceremade.dauphine.fr}}.
}

\begin{document}

\maketitle

\begin{abstract}
We propose an entropy minimization viewpoint on variational mean-field games with diffusion and quadratic Hamiltonian.  We carefully analyze the time-discretization of such problems, establish $\Gamma$-convergence results as the time step vanishes and propose an efficient algorithm relying on this entropic interpretation as well as on the Sinkhorn scaling algorithm.

\end{abstract}

\textbf{Keywords:} Mean-Field Games, Fokker-Planck equation, $\Gamma$-convergence entropy minimization, Schr\"{o}dinger bridges, Sinkhorn algorithm.

\medskip

\textbf{MS Classification:} 65K10, 49M05.

 
\section{Introduction}\label{sec-intro} 

It is well-known since the seminal work of Lasry and Lions \cite{MFGLL1, MFGLL2, MFGLL3} that the mean-field game system 
\begin{equation}
\label{MFGsystem2}
\left\{\begin{array}{lll}
 -\partial_t u -\frac{1}{2} \Delta u  + \frac{1}{2} \vert \nabla u \vert^2= f[\rho_t], \; &(t,x)\in (0,T)\times \R^d\\
\partial_t \rho - \frac{1}{2} \Delta \rho -\dive(\rho \nabla u)=0, \; &(t,x)\in (0,T)\times \R^d \\
\rho{_{\vert}}_{t=0}=\rho_0, \; u{_{\vert}}_{t=T}= g[\rho_T],
\end{array}\right.\end{equation}
may be seen, at least formally (see \cite{CGPT} for a detailed analysis), as the system of optimality conditions for the minimization problem:
\begin{equation}\label{minimfg}
\inf_{(\rho, v)} \left\{ \J(\rho,v) \; : \partial_t \rho - \frac{1}{2} \Delta \rho + \dive(\rho v)=0, \rho{_{\vert}}_{t=0}=\rho_0\right\}
\end{equation}
where 
\begin{equation}\label{defdeJ}
\J(\rho,v):=  \frac{1}{2}  \int_0^T \int_{\R^d} \vert v_t\vert^2 \mbox{d} \rho_t(x) \mbox{d}t + \int_0^T F(\rho_t) \mbox{d} t + G(\rho_T).
\end{equation} 
We assume here that  $f$ and $g$ are \emph{potentials} i.e. admit $F$ and $G$ as primitives that is, $f[\rho]$ and $g[\rho]$ represent the directional derivative of the functionals $F$ and $G$ at $\rho$ in some suitable sense.

\smallskip

Our aim  is to relate precisely the \emph{Eulerian}   variational problem \pref{minimfg}-\pref{defdeJ} with a \emph{Lagrangian} relative entropy minimization problem and to develop a suitable efficient algorithm based on this entropic interpretation.  This entropy minimization  approach has its roots in the classical paper of Schr\"{o}dinger \cite{Schrodinger31}. The Schr\"{o}dinger bridge problem has deep connections with large deviations and has been extensively analyzed and developed by Mikami \cite{Mikami04} and  L\'eonard \cite{leonard2012schrodinger}, \cite{leonard2013survey} who in particular proved convergence of Schr\"{o}dinger bridges to optimal transport geodesics as the noise intensity vanishes. We also refer to Cattiaux and L\'eonard \cite{catleo} for further connections between entropy minimization, large deviations  and optimal control. 

\smallskip

Entropy minimization has also proved to be an efficient computational strategy for optimal transport by Cuturi \cite{Cut}  who made the connection with the powerful and versatile Sinkhorn scaling algorithm (also see \cite{Ben}). One advantage of reformulating \pref{minimfg}-\pref{defdeJ} as an entropy minimization is therefore that it enables one to use specific numerical schemes based on the Sinkhorn algorithm. However, the entropic viewpoint is, to the best of our knowledge, restricted to the quadratic Hamiltonian case and there are now efficient solvers, developed by Achdou and his coauthors \cite{achdou1, achdou2, achdou3} based on the PDE system which go much beyond the quadratic case. Gu\'eant in \cite{gueant} designed a monotone scheme specially intended for the quadratic Hamiltonian case, relying on ingenious changes of variables and the Hopf-Cole transform, the approach we propose in this paper shares some common features with Gu\'eant's method. For variational numerical methods based on more traditional convex optimization techniques, we refer for instance to  \cite{benamoucarlierjota}, \cite{andreev} and \cite{silva}. 

\smallskip

 The starting point of our analysis is the equivalence between the  Schr\"{o}dinger bridge problem and the optimal control (with kinetic energy  as cost)  of the Fokker-Planck equation as emphasized by  Chen, Georgiou and Pavon \cite{CGP} and Gentil, L\'eonard and Ripani \cite{GLR}.  The recent work of Gigli and Tamanini \cite{GigTam18a}  provides a very general and analytical approach to the dynamic formulation  of entropic transport problems. The entropy minimization viewpoint has been recently fruitfully used in the context of incompressible flows by Arnaudon et al. \cite{leonardBredinger} also see \cite{benamou2017generalized} for a numerical approach. Indeed, incompressibility (in the case of the flat torus instead of $\R^d$) can be seen as an instance of \pref{minimfg}-\pref{defdeJ} in the  somehow extreme case where $F(\rho)$ is $0$ when $\rho$ is uniform and $+\infty$ otherwise.

\smallskip 

The fact that variational problems like \pref{minimfg} can be reformulated as entropy minimization is not new and goes back to the analysis of Dawson and G\"{a}rtner \cite{DawsonGartner} on large deviations for weakly interacting diffusions. The connection between \pref{minimfg} and entropy minimization can be seen  as a consequence of Girsanov theory (under weak regularity assumptions, see L\'eonard \cite{LeonardGirsanov}), as explained in F\"{o}llmer \cite{Follmer} (see in particular section 1.4, p.165).  In the present work, we will follow a different and more elementary approach, motivated by numerical considerations.  We will show equivalence results at the level of discretized in time problems (which are precisely the ones we will numerically solve), prove $\Gamma$-convergence results as the discretization step vanishes and recover as a consequence equivalence for the continuous time minimization in \pref{minimfg} and an entropy minimization at the level of the path space.

\smallskip

The paper is organized as follows. Section \ref{sec-prel}  is devoted to some preliminaries and introduces an entropy minimization problem we propose as a Lagrangian counterpart of \eqref{minimfg}. In section \ref{sec-timedisc}, the time discretization of both the Lagrangian and Eulerian problems are shown to be equivalent. We then establish a $\Gamma$-convergence result which enables to recover the equivalence of the two formulations at the continuous time level as well in section \ref{sec-gamma}. In section \ref{sec-algo}, we propose a numerical scheme, based on the Sinkhorn scaling algorithm to solve the entropy minimization problem and we present some numerical experiments.

\smallskip

\textbf{Acknowledgments.} We want to thank the anonymus referees for their observations and careful reading of the paper. Part of this work was done while D.M. was a FSMP visitor researcher in INRIA.

\section{Preliminaries}\label{sec-prel}

We denote by $\PP_2(\R^d)$  the set of Borel probability measures with finite second moments, endowed with the Wasserstein distance $\W_2$, whose square is defined by
\[\W_2^2(\rho_0, \rho_1):=\inf_{\gamma\in \Pi(\rho_0, \rho_1)}  \Big\{ \int_{\R^d\times \R^d} \vert x-y\vert^2 \mbox{d} \gamma(x,y)    \Big\}, \; (\rho_0, \rho_1) \in \PP_2(\R^d)\times \PP_2(\R^d)\]
$\Pi(\rho_0, \rho_1)$ being the set of transport plans between $\rho_0$ and $\rho_1$ i.e. the set of probability measures on $\R^d\times \R^d$ having $\rho_0$ and $\rho_1$ as marginals. 

\smallskip
We denote by $\Leb^d$ the Lebesgue measure on $\R^d$ and whenever a measure $\rho$ is absolutely continuous with respect to $\Leb^d$, we will denote by $\rho$ both the measure itself and its density with respect to $\Leb^d$. Throughout the paper we will assume that $F$ and $G$ can be written as the sum of a convex local term and a term which is continuous for the Wasserstein distance $\W_2$ i.e. 
\[F=F_1+F_2, \; G=G_1+G_2,  \]
such that
\begin{itemize}
\item there are convex continuous integrands $L_F$ and $L_G$ : $\R^d \to \R_+$ such that $L_F(0)=L_G(0)=0$ and
\[\begin{split}
F_1(\rho)&=\begin{cases} \int_{\R^d} L_F(\rho(x)) \mbox{d} x  \mbox{ if $\rho \ll \Leb^d$}\\ + \infty \mbox{ otherwise}   \end{cases},\\
G_1(\rho)&=\begin{cases} \int_{\R^d} L_G(\rho(x)) \mbox{d} x  \mbox{ if $\rho \ll \Leb^d$}\\ + \infty \mbox{ otherwise} \end{cases}
\end{split}\]
\item $F_2$,  $G_2$  :   $\PP_2(\R^d)\to \R_+$ are  continuous for $\W_2$. 
\end{itemize}

\smallskip

Given a Polish space $X$,  a Borel probability measure $q$ on $X$ and  $r$ a $\sigma$-finite measure on $X$,  the relative entropy of $q$ with respect to $r$  is given\footnote{some caution must be taken when $r$ is $\sigma$-finite but unbounded,  in this case one can find a measurable and bounded from below potential $V$: $X\mapsto \R$ such that $e^{-V} r$ is a probability measure on $X$ and then define for each probability $q$ for which $\int_X V \mbox{d}q <+\infty$ $H(q\vert r)$ as $H(q\vert r)=H(q\vert e^{-V} r)-\int_X V \mbox{d} q$ see Appendix 1 in \cite{leonard2013survey} for details. When $R$ is the reversible Wiener measure, we shall apply this trick with $V(\omega)=\pi \vert \omega(0)\vert^2$ which will enable us to define $H(Q\vert R)$ as soon as $Q_0\in \PP_2(\R^d)$.} by 
\[\H(q\vert r):=\begin{cases} \int_X \log\Big( \frac{ d q}{d r } \Big) \mbox{d} q \mbox{ if $q \ll r $}\\ + \infty \mbox{ otherwise}   \end{cases}\]
where $\frac{ d q}{d r} $ stands for the Radon-Nikodym derivative of $q$ with respect to $r$. If $X=\R^d$ and $\rho\in \PP_2(\R^d)$ we shall simply denote by $\Ent(\rho)$ the relative entropy of $\rho$ with respect to $\Leb^d$. Let us also recall that $\Ent $ is controlled from below   by the second moment (see \pref{entrminus-bound} for a linear lower bound).
We shall always assume that the initial condition $\rho_0$ satisfies
\begin{equation}\label{hyprho0}
\rho_0 \in \PP_2(\R^d) \cap L^1(\R^d), \; \mbox{ and } \Ent(\rho_0) <+\infty
\end{equation}
so that $-\infty <\Ent(\rho_0)<+\infty$.  

\smallskip

As outlined in the introduction, our goal is to relate \pref{minimfg} with a relative entropy minimization over the path space $\Omega:=C([0,T], \R^d)$. Let $e_t$ denote the evaluation at time $t\in [0,T]$ i.e. $e_t(\omega)=\omega(t)$ for every $\omega\in \Omega$. We take as reference measure on $\Omega$ the reversible Wiener measure $R$ defined by
\begin{equation}
R:=\int_{\R^d} \delta_{x+ B} \, \mbox{d} x
\end{equation}
where $B=(B_t)_{t\in [0,T]}$ is a standard $d$-dimensional Brownian motion starting at $0$. Given $Q\in \PP(\Omega)$ a Borel probability measure on $\Omega$, set
\begin{equation}\label{defmargQ}
Q_t:={e_t}_\# Q, \; Q_{s, t}:=(e_{t}, e_{s})_\# Q, \; 0\le s < t \le  T,
\end{equation}
and more generally if $t_1, \cdots, t_n \in [0,T]^n$
\[Q_{t_1, \cdots, t_n}:=(e_{t_1}, \cdots, e_{t_n})_\# Q.\]
We define similarly  $R_t$, $R_{s,t}$ and $R_{t_1, \cdots, t_n}$ and  observe that by construction $R_t:= {e_t}_\# R=\Leb^d$ for every $t\in [0,T]$ and for every $0\le s < t \le  T$, $R_{s,t}=(e_{t}, e_{s})_\# R$ is given by
\[R_{s,t} (\mbox{d}x, \mbox{d}y )= P_{t-s}(x-y)\mbox{d}x \mbox{d}y\]
where $P_t$ is the heat kernel\footnote{Note that our convention is to take the heat kernel associated with $\frac{1}{2}\Delta$.}:
 \begin{equation}\label{heatker}
 P_t(z):=\frac{1}{(2\pi t)^{\frac{d}{2}} }\exp\Big(-\frac{\vert z \vert^2}{2t}\Big), \; t>0, \; x\in \R^d.
\end{equation}

Let us finally introduce a few notations. Given $\mu, \nu \in \PP_2(\R^d)$ and $h\in (0,T)$ let us define $\FP_h(\mu,\nu)$ as the infimum of the kinetic energy
\begin{equation}
\FP_h(\mu,\nu):=\inf_{(\rho, v)} \Big\{ \frac{1}{2}\int_0^h \int_{\R^d}  \vert v_t\vert^2 \mbox{d} \rho_t(x) \mbox{d}t \;  \Big\}
\end{equation} 
among pairs $(\rho, v)$ solving the Fokker-Planck equation on $(0,h)\times \R^d$ with endpoints $\mu$ and $\nu$
\[\partial_t \rho -\frac{1}{2} \Delta \rho+\dive(\rho v)=0, \; \rho_{\vert_{t=0}}=\mu, \; \rho_{\vert_{t=h}}=\nu.\]
Also define
\begin{equation}\label{schrodyn}
S_h(\mu, \nu):=\inf \{ H(Q\vert R) \;: \; Q\in \PP(C([0,h], \R^d), \;  Q_0=\mu, \; Q_h=\nu\}
\end{equation}
(where, slightly abusing notations in the formula above, $R$ denotes the reversible Wiener measure on $C([0,h], \R^d)$). Following L\'eonard \cite{leonard2013survey}, and writing the disintegrations of $Q$ and $R$ with respect to their marginals $Q_{0,h}$, $R_{0, h}$:
\[Q=\int_{\R^d \times \R^d} Q(.\vert x_0, x_h) \mbox{d} Q_{0,h}(x_0, x_h), \; R=\int_{\R^d \times \R^d} R(.\vert x_0, x_h) \mbox{d} R_{0,h}(x_0, x_h)\]
(so that $R(.\vert x_0, x_h)$ is the probability Law of a Brownian bridge)
since 
\[H(Q\vert R)=H(Q_{0,h} \vert R_{0,h})+ \int_{\R^d\times \R^d}  H(Q(. \vert  x_0, x_h)\vert R( . \vert x_0, x_h) )\mbox{d} Q_{0,h}(x_0, x_h)\]
and $H(Q(. \vert  x_0, x_h)\vert R( . \vert x_0, x_h) )\ge 0$ with an equality when $Q(. \vert  x_0, x_h)= R( . \vert x_0, x_h)$ (which means that $R$ and the optimal $Q$ share the same bridge) one can express $S_h$ as the value of a static entropy minimization problem:
\begin{equation}\label{schrosta}
S_h(\mu, \nu)=\inf \{ H(\gamma \vert R_{0,h}) \; : \gamma \in \Pi(\mu, \nu)\}
\end{equation}
and the solution of the dynamic problem \pref{schrodyn} is obtained by 
\[Q=\int_{\R^d \times \R^d} R(.\vert x_0, x_h) \mbox{d} \gamma(x_0, x_h)\]
with $\gamma$ optimal for the static problem \pref{schrosta}. 

The following result, proven in  Chen, Georgiou, Pavon \cite{CGP}  and Gentil, L\'eonard and Ripani \cite{GLR} (in a more general setting) connects $\FP_h$ to $S_h$ and can be viewed as a noisy version of the Benamou-Brenier formula:
 
\begin{thm}\label{noisybb}
Let $\mu, \nu \in \PP_2(\R^d)$ and $h\in (0,T)$, $\FP_h(\mu, \nu)$ and $S_h(\mu, \nu)$ are related by
\begin{equation}\label{lienfps}
S_h(\mu, \nu)= \FP_h(\mu, \nu)+ \Ent(\mu).
\end{equation}
\end{thm}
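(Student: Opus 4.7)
The plan is to prove the two inequalities in \eqref{lienfps} separately by making use of the dynamic representation \eqref{schrodyn} of $S_h$ and Girsanov's theorem, which relates the relative entropy of a drifted diffusion with respect to Brownian motion to the $L^2$ norm of its drift.

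For the inequality $S_h(\mu,\nu)\le \FP_h(\mu,\nu)+\Ent(\mu)$, I would take a competitor $(\rho,v)$ for $\FP_h(\mu,\nu)$ and consider the SDE
\[
dX_t= v_t(X_t)\,dt+dB_t,\qquad X_0\sim\mu,
\]
whose law $Q\in\PP(C([0,h],\R^d))$ satisfies $Q_0=\mu$, $Q_h=\nu$ (by uniqueness of the Fokker-Planck equation with the given drift), so that $Q$ is admissible in \eqref{schrodyn}. Since $R$ is obtained by starting a Brownian motion from the Lebesgue measure, disintegrating $R$ and $Q$ with respect to their initial marginals yields $\frac{dQ}{dR}(\omega)=\frac{d\mu}{d\Leb^d}(\omega(0))\cdot\frac{dW^{v,\omega(0)}}{dW^{\omega(0)}}(\omega)$, where $W^x$ is the Wiener measure started at $x$ and $W^{v,x}$ the law of the drifted diffusion started at $x$. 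Girsanov's formula then gives
\[
H(Q\vert R)=\Ent(\mu)+\frac12\int_0^h\int_{\R^d}|v_t|^2\,d\rho_t(x)\,dt,
\]
and taking the infimum over $(\rho,v)$ produces the desired bound.

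For the reverse inequality, I would start from a minimizer $Q^*$ for $S_h(\mu,\nu)$ (or a minimizing sequence), and use the fact that $H(Q^*\vert R)<+\infty$ forces $Q^*$ to be absolutely continuous with respect to $R$ with a drift that can be identified: by the Föllmer/Léonard theory recalled in \cite{Follmer,LeonardGirsanov,leonard2013survey}, any such $Q^*$ is the law of a semimartingale $dX_t=v_t(X_t,\omega)\,dt+dB_t$ with $X_0\sim\mu$, and Girsanov again yields the same identity
\[
H(Q^*\vert R)=\Ent(\mu)+\frac12\mathbb{E}_{Q^*}\!\left[\int_0^h|v_t|^2\,dt\right].
\]
Setting $\rho_t:=Q^*_t$ and (after conditioning) replacing $v$ by its conditional expectation $\bar v_t(x):=\mathbb{E}_{Q^*}[v_t\vert X_t=x]$, the pair $(\rho,\bar v)$ solves the Fokker-Planck equation with endpoints $\mu,\nu$, and Jensen's inequality gives $\mathbb{E}_{Q^*}\bigl[\int_0^h|\bar v_t(X_t)|^2\,dt\bigr]\le \mathbb{E}_{Q^*}\bigl[\int_0^h|v_t|^2\,dt\bigr]$. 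Hence $\FP_h(\mu,\nu)\le S_h(\mu,\nu)-\Ent(\mu)$, concluding the proof.

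The main technical obstacle is the lower-bound direction, specifically the drift identification: one must show that finiteness of $H(Q\vert R)$ implies that $Q$ is the law of a Brownian motion plus a finite-energy drift, and that the Girsanov identity holds under the minimal regularity assumptions afforded by $\mu,\nu\in\PP_2(\R^d)$. This is classical but delicate (see \cite{LeonardGirsanov} for the sharp form), and in the present setting the cleanest route is to invoke the existing results of Chen--Georgiou--Pavon \cite{CGP} and Gentil--L\'eonard--Ripani \cite{GLR} for this step, while keeping the Girsanov computation above as an explicit justification of both inequalities.
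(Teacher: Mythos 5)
The paper does not actually prove Theorem \ref{noisybb}: it is imported verbatim from Chen--Georgiou--Pavon \cite{CGP} and Gentil--L\'eonard--Ripani \cite{GLR}, so there is no internal proof to compare against. Your sketch is the standard Girsanov/F\"ollmer route to this identity and is correct in outline; it is essentially the argument carried out in \cite{Follmer} and \cite{LeonardGirsanov} (the treatments in \cite{CGP} and \cite{GLR} reach the same formula through stochastic control and entropic-interpolation/semigroup machinery respectively, but the content is the same). The decomposition $H(Q\vert R)=H(Q_0\vert\Leb^d)+\mathbb{E}\bigl[H(Q(\cdot\vert\omega(0))\vert W^{\omega(0)})\bigr]$ and the Jensen/conditioning step in the lower bound are exactly right, and you correctly isolate the drift-identification theorem (finite entropy relative to Wiener measure implies a semimartingale representation with finite-energy drift) as the hard half.

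The one place where your upper bound glosses over real work is the phrase ``by uniqueness of the Fokker--Planck equation with the given drift.'' A competitor $(\rho,v)$ for $\FP_h(\mu,\nu)$ only gives a drift $v\in L^2(\rho_t\,dt)$, for which the SDE $dX_t=v_t(X_t)\,dt+dB_t$ need not be well posed and the Fokker--Planck equation need not have a unique solution; what is actually needed is the superposition principle for second-order Fokker--Planck equations (Trevisan's extension of the Ambrosio--Figalli result), which produces \emph{some} law $Q$ with one-time marginals $\rho_t$ and the stated drift, whence $Q_h=\nu$ without any uniqueness claim. One must then still verify the Girsanov identity $H(Q\vert R)=\Ent(\mu)+\frac12\int_0^h\int_{\R^d}\vert v_t\vert^2\,d\rho_t\,dt$ for this possibly non-Markovian $Q$ (or first mollify $v$ and pass to the limit). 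This is precisely the technical content delegated to \cite{CGP,GLR,LeonardGirsanov}, so your decision to invoke those references for the delicate steps is appropriate; just be aware that the gap sits in the upper bound as well as the lower one.
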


Let us also mention an alternative formulation, involving the Fisher information (see \cite{GLR, CGP}):
\begin{equation}\label{schrofisher}
 \FP_h(\mu, \nu)=\inf_{(\rho, v)} \Big\{\int_0^h \Big(\int_{\R^d}   \frac{1}{2}\vert v_t\vert^2 \mbox{d} \rho_t(x) +\frac{1}{8} I(\rho_t)\Big) \mbox{d}t   \Big\} + \frac{1}{2} (\Ent(\nu)-\Ent(\mu))
\end{equation}
where the infimum is taken among solutions\footnote{Note that if $\int_0^T \int_{\R^d} \vert v_t\vert^2 \mbox{d} \rho_t(x) \mbox{d}t <+\infty$ and $\partial_t \rho +\dive(\rho v)=0$ then $t\mapsto \rho_t$ is continuous in $\W_2$.} of the continuity equation with fixed endpoints
\[\partial_t \rho +\dive(\rho v)=0, \; \rho_{\vert_{t=0}}=\mu, \; \rho_{\vert_{t=h}}=\nu\]
and $I$ denotes the Fisher information
\begin{equation}\label{fisherinfo}
I(\rho):= \int_{\R^d} \vert \nabla \log(\rho)\vert^2 \rho=\int_{\R^d} \frac{\vert \nabla \rho\vert^2}{ \rho} =4 \Vert \nabla (\sqrt{\rho}) \Vert^2_{L^2(\R^d)}.
\end{equation}

Given a continuous curve of measures $\mu\in C([0,T], (\PP_2(\R^d), \W_2))$ :  $t\in [0,T]\mapsto \mu_t \in \PP_2(\R^d)$ define
\[\E(\mu):=\inf_{v} \Big\{ \frac{1}{2}\int_0^T \int_{\R^d}  \vert v_t\vert^2 \mbox{d}\mu_t(x) \mbox{d}t \;  : \; \partial_t \mu -\frac{1}{2} \Delta \mu+\dive(\mu v)=0\Big\} \]
as well as the cost
\[\CC(\mu):= G(\mu_T)+ \int_0^T F(\mu_t) \mbox{d} t \]
so that the variational formulation \pref{minimfg}-\pref{defdeJ} of the MFG system can be simply rewritten as
\begin{equation}\label{minimfgsynt}
\inf\{ \E(\mu)+\CC(\mu) \;  : \; \mu \in C([0,T], (\PP_2(\R^d), \W_2)), \; \mu_{\vert_{t=0}}=\rho_0\}
\end{equation}
For $\mu=(\mu_t)_{t\in [0,T]} \in C([0,T], (\PP_2(\R^d), \W_2)$, define also the minimal entropic cost
\begin{equation}\label{schroinfmarg}
\SS(\mu):=\inf\{ H(Q\vert R) \; : \; Q_t=\mu_t, \; \forall t\in [0,T]\}
\end{equation}
which can be viewed as a variant of \pref{schrodyn} with infinitely many marginal constraints. We shall prove, by a careful inspection of a suitable time discretization (section \ref{sec-timedisc}) and $\Gamma$-convergence arguments (see section \ref{sec-gamma}), that when $\Ent(\mu_0)<+\infty$ the following relation holds
\[\SS(\mu)=\E(\mu) +\Ent(\mu_0)\]
so that \pref{minimfgsynt} can be reformulated as
\begin{equation}\label{minentrmfg}
\inf \Big\{ H(Q\vert R)+ \CC((Q_t)_{t\in [0,T]}) \; : \; Q\in \PP(\Omega), \; Q_0=\rho_0\Big\}. 
\end{equation}

\section{Time discretization}\label{sec-timedisc}

\subsection{Discretization}

Given a positive integer $N$ and $\mu_0, \cdots, \mu_N\in \PP_2(\R^d)$, define the respective discretization of $\E$, $\CC$ and $\SS$:
\begin{equation}\label{discreteFP}
\E^N(\mu_0, \cdots , \mu_N):=\sum_{k=0}^{N-1} \FP_{\frac{T}{N}} (\mu_k, \mu_{k+1}), \;
\end{equation}
\begin{equation}\label{discreteC}
\CC^N(\mu_0, \cdots , \mu_N):=\frac{T}{N} \sum_{k=1}^{N-1} F(\mu_k) + G(\mu_N)
\end{equation}
and
\begin{equation}\label{discrschro}
\SS^N(\mu_0, \cdots, \mu_N):=\inf\Big\{ H(Q\vert R) \; : \; Q\in \PP(\Omega), \;  Q_{\frac{kT}{N}}=\mu_k, \; k=0, \cdots N\Big\}
\end{equation}
and observe that, exactly as for \pref{schrodyn}, $\SS^N$ can be written as the value of the static multi-marginal problem
\begin{equation}\label{discrshrosta}
\SS^N(\mu_0, \cdots, \mu_N):=\inf_{\gamma^N\in \Pi(\mu_0, \cdots, \mu_N)} H(\gamma^N \vert R^N)
\end{equation}
where $\Pi(\mu_0, \cdots, \mu_N)$ is the set of probability measures on $(\R^d)^{N+1}$ having $\mu_0, \cdots, \mu_N$ as marginals and 
\begin{equation}\label{discretewienerr}
R^N:=R_{0, \frac{T}{N}, \cdots, T}.
\end{equation}
One then discretizes \pref{minimfgsynt} as
\begin{equation}\label{minimfgsyntn}
\inf\{ \E^N(\mu_0, \cdots, \mu_N)+\CC^N(\mu_0, \cdots, \mu_N), \; \mu_i \in \PP_2(\R^d), \;  \mu_0=\rho_0\}. 
\end{equation}
In an analogous way, the discrete counterpart of \eqref{minentrmfg} becomes
 \begin{equation}\label{entminimfgsyntn}
\inf\{ \SS^N(\mu_0, \cdots, \mu_N)+\CC^N(\mu_0, \cdots, \mu_N), \; \mu_i \in \PP_2(\R^d), \;  \mu_0=\rho_0\}. 
\end{equation}

\subsection{On entropy minimization and Markovianity}\label{sec-markov}

In this section, our aim is to emphasize the role of Markovianity in entropy minimization problems with fixed marginals as in \pref{discrschro}. First, we need to recall some fundamentals regarding disintegrations and conditional laws. Given $Y_1$ and $Y_2$, two Polish space and a Borel probability measure $P\in  \mathcal{P}(Y_1\times Y_2)$ on the product and denoting by $P_1$ the first marginal of $P$, the disintegration Theorem (see \cite{DellacherieMeyer}) says that there exists a ($P_1$-a.e. unique) Borel family of probabilty measure $y_1 \mapsto P(. \vert y_1)\in \mathcal{P}(Y_2)$ such that
\begin{equation}\label{disintP1}
P(\mbox{d} y_1 \mbox{d} y_2)=P_1(\mbox{d} y_1) P (\mbox{d} y_2\vert y_1) 
\end{equation}  
\pref{disintP1} is called the disintegration of $P$ with respect to its first marginal $P_1$ and means that for every $\varphi\in C_b(Y_1\times Y_2)$ one has
\[\int_{Y_1\times Y_2} \varphi \mbox{d}P =\int_{Y_1} \Big(\int_{Y_2} \varphi(y_1, y_2) \mbox{d}P(y_2 \vert y_1) \Big) \mbox{d} P_1(y_1).\]
Of course $P(. \vert y_1)$ can be interpreted as the conditional probability of $y_2$ given $y_1$. In a similar way, $P$ can be disintegrated with respect to its second marginal as
\begin{equation}\label{disintP2}
P(\mbox{d} y_1 \mbox{d} y_2)=P (\mbox{d} y_1 \vert y_2) P_2 (\mbox{d} y_2). 
\end{equation}

Given $n\geq 3$, $n$ Polish spaces $X_1, \ldots X_n$, $X:=X_1 \times X_2 \times \cdots \times X_n$ and $Q\in \mathcal{P}(X)$, denoting by $e_i$ the $i$-th canonical projection from $X$ to $X_i$, we define
\[Q_i:={e_i}_\#Q, \; Q_{i, \ldots, k}:=(e_i, \ldots, e_k)_\#Q, \;  \mbox{ for } 1\leq i<k\leq n.\]
Given $i, k$ with $1\leq k <i\leq  n$, the disintegration theorem recalled above (with $Y_1=X_1 \times \cdots X_{k}$ and $Y_2=X_{k+1}\times \ldots X_i$) gives the (past, and future conditioned)  
disintegrations:
\[\begin{split}
Q_{1, \ldots, i}(\mbox{d} x_1  \ldots \mbox{d} x_i)= Q_{1, \ldots, k}(\mbox{d} x_1  \ldots \mbox{d} x_{k}) Q_{1, \ldots, i} (\mbox{d} x_{k+1} \ldots \mbox{d} x_i \vert x_1, \ldots, x_{k})\\
=  Q_{1, \ldots, i} (\mbox{d} x_{1} \ldots \mbox{d} x_k \vert x_{k+1}, \ldots, x_{i}) Q_{k+1, \ldots, i} (\mbox{d} x_{k+1} \ldots \mbox{d} x_i).
\end{split}\]
\begin{defi}
Let $n\geq 3$,  $X_1, \cdots, X_n$ be Polish spaces, $X:=X_1 \times X_2 \times \cdots \times X_n$, a Borel probability measure on $X$, $Q$,   is called Markovian if for every $i$ with $3\leq i \leq n$ one has
\[ Q_{1, \ldots, i} (\mbox{d} x_{i}  \vert x_1, \ldots, x_{i-1})=Q_{i-1, i} (\mbox{d} x_i \vert x_{i-1}).\]
\end{defi}
Interpreting  $i$ as a discrete time index, the previous definition agrees with the more standard notion of Markov process in the sense that $Q$ is a Markovian measure if it is the law of a Markov process. Note also that if $Q$ is Markovian so are $Q_{1, \ldots, n-1}$ and $Q_{2, \ldots, n}$.

The discretized reversible Wiener measure measure $R^N$ defined  by \pref{discretewienerr} is not a probability measure (it is not even finite but just $\sigma$-finite) however the equivalent measure  on $({\R^d})^{N+1}$
\begin{equation}\label{tilRNw}
\tilR^N(\mbox{d}x_0 \ldots \mbox{d}x_N):=e^{-\pi \vert x_0\vert^2} R^N(\mbox{d}x_0 \ldots \mbox{d}x_N),
\end{equation}
is a Markov probability measure. 
Let us now define Markov concatenations (or glued plans):

\begin{defi} Given Polish spaces $Y_1$, $Y_2$, $Y_3$,  two Borel probability measures $P_{12} \in \mathcal{P}(Y_1 \times Y_2)$ and $P_{23} \in \mathcal{P}(Y_2 \times Y_3)$ having the same marginal $P_2$ on $Y_2$,    the Markov-concatenation of the two plans $P_{12}$ and $P_{23}$, which we denote  by $P_{12} \circ P_{23}$ is the Borel probability measure on $Y_1\times Y_2\times Y_3$ defined by
\begin{equation}\label{eqn:defconcatenation}(P_{12} \circ P_{23})(d y_1 \mbox{d}y_2  \mbox{d}y_3)= P_{12}(\mbox{d} y_1 \vert y_2) P_2(\mbox{d} y_2) P_{23} (\mbox{d} y_3 \vert y_2)
\end{equation}
\end{defi}

Note that $(e_1, e_2)_\# (P_{12} \circ P_{23})=P_{12}$, $(e_2, e_3)_\# (P_{12} \circ P_{23})=P_{23}$. Note also that $P_{12} \circ P_{23}$ can be written equivalently as 
\[\begin{split}
(P_{12}\circ P_{23})(\mbox{d}y_1 \mbox{d}y_2  \mbox{d}y_3)&= P_{12}(\mbox{d} y_1 \mbox{d} y_2 )  P_{23} (\mbox{d} y_3 \vert y_2)\\
= P_{12}(\mbox{d} y_1 \vert  y_2 )  P_{23} (\mbox{d} y_2 \mbox{d}  y_3)
\end{split}\]
which obviously implies that  $(P_{12} \circ P_{23})$ is Markovian on $Y_1\times Y_2 \times Y_3$. 

Conversely, the same calculation proves also that if $Q$ is Markovian and $(e_i, e_{j})_{\sharp}Q=Q_{ij}$, then $Q=Q_{12} \circ Q_{23}$. In fact, by definition of Markovian measure, we have
$$Q(\mbox{d}y_1 \mbox{d}y_2  \mbox{d}y_3) = Q_{12}(\mbox{d}y_1 \mbox{d}y_2) Q_{23} ( \mbox{d} y_3 | y_2); $$
then applying the disintegration theorem to $Q_{12}$ gives us exactly the representation given in \eqref{eqn:defconcatenation}, proving the claim.

If now $n\geq 3$ and $Q\in \PP(X)$ with $X=X_1 \times \cdots \times X_n$ as above, for every pair of indices $1\leq k <i \leq n$ since $Q_{1, \ldots, k}$ and $Q_{k, \ldots, i}$ share the same marginal on $X_k$, one can define the concatenation $Q_{1, \ldots, k} \circ Q_{k, \ldots, i}$. The characterization of Markov measures in terms of concatenations is given by:

\begin{lem}\label{markovfromgluing}
Let $n\geq 3$,  $X_1, \cdots, X_n$ be Polish spaces, $X:=X_1 \times X_2 \times \cdots \times X_n$  and $Q$ be a Borel probability measure on $X$ then the following statements are equivalent

\begin{enumerate}

\item $Q$ is Markovian,

\item $Q_{1, \ldots, n-1}$ is Markovian and $Q=Q_{1, \ldots, n-1} \circ Q_{n-1, n}$,

\item  $Q_{2, \ldots, n}$ is Markovian and $Q=Q_{1,2} \circ Q_{2, \ldots, n}$.

\end{enumerate}

\end{lem}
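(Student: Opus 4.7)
The plan is to establish the equivalence (1) $\Leftrightarrow$ (2), from which (1) $\Leftrightarrow$ (3) will follow by a time-reversal argument (reversing the order of the factors exchanges conditions (2) and (3) while, as will be shown en passant, preserving condition (1)). Throughout, the main tool is uniqueness of the disintegration with respect to a given marginal, which was already invoked in the paragraph preceding the lemma.

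For (1) $\Rightarrow$ (2), I would first check that Markovianity of $Q$ passes automatically to the marginal $Q_{1,\ldots,n-1}$: indeed for $3\le i\le n-1$ one has $(Q_{1,\ldots,n-1})_{1,\ldots,i}=Q_{1,\ldots,i}$ and $(Q_{1,\ldots,n-1})_{i-1,i}=Q_{i-1,i}$, so the defining Markov identity for $Q_{1,\ldots,n-1}$ is exactly the one for $Q$ at those indices. Then the disintegration
\[
Q(\mbox{d}x_1\ldots\mbox{d}x_n)=Q_{1,\ldots,n-1}(\mbox{d}x_1\ldots\mbox{d}x_{n-1})\,Q(\mbox{d}x_n\vert x_1,\ldots,x_{n-1}),
\]
combined with the Markov property of $Q$ at index $n$, yields the concatenation formula $Q=Q_{1,\ldots,n-1}\circ Q_{n-1,n}$, applied with $Y_1=X_1\times\cdots\times X_{n-2}$, $Y_2=X_{n-1}$, $Y_3=X_n$.

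For (2) $\Rightarrow$ (1), I would verify the Markov identity $Q_{1,\ldots,i}(\mbox{d}x_i\vert x_1,\ldots,x_{i-1})=Q_{i-1,i}(\mbox{d}x_i\vert x_{i-1})$ separately for $i\le n-1$ and for $i=n$: in the first range it follows directly from the assumed Markovianity of $Q_{1,\ldots,n-1}$, since $Q_{1,\ldots,i}$ is one of its marginals; for $i=n$ it can be read off at once from the concatenation formula, which precisely asserts $Q(\mbox{d}x_n\vert x_1,\ldots,x_{n-1})=Q_{n-1,n}(\mbox{d}x_n\vert x_{n-1})$ almost surely.

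The equivalence (1) $\Leftrightarrow$ (3) will then follow by the same scheme applied to the reflected measure $\sigma_\# Q$, where $\sigma(x_1,\ldots,x_n)=(x_n,\ldots,x_1)$: iterating the Markov property gives the factorization $Q=Q_1\otimes\prod_{i\ge 2}Q_{i-1,i}(\cdot\vert x_{i-1})$, and a direct Bayes'-rule computation on this factorization yields the reversed identity $Q(\mbox{d}x_1\vert x_2,\ldots,x_n)=Q_{1,2}(\mbox{d}x_1\vert x_2)$, showing that Markovianity is preserved under coordinate reversal. I do not expect any genuine obstacle in carrying this out: every implication is essentially a one-line manipulation of the disintegration theorem, and the only thing to keep careful track of is which marginal each conditional kernel is computed against.
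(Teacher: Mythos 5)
Your argument is correct, but it is organized quite differently from the paper's. The paper proves the cyclic chain $1.\Rightarrow 2.\Rightarrow 3.\Rightarrow 1.$, with essentially all of the work concentrated in the step $2.\Rightarrow 3.$, which is carried out as an explicit five-line chain of disintegrations together with uniqueness of conditional laws. You instead prove $1.\Leftrightarrow 2.$ directly --- both directions of which are indeed one-line consequences of uniqueness of the disintegration, exactly as you say --- and then obtain $1.\Leftrightarrow 3.$ by applying this equivalence to the reflected measure $\sigma_\# Q$. This buys a cleaner formal structure, at the price of having to prove that coordinate reversal preserves Markovianity; that is where the content of the paper's $2.\Rightarrow 3.$ step reappears in your approach. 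One point you must spell out: the single identity $Q(\mbox{d}x_1\vert x_2,\ldots,x_n)=Q_{1,2}(\mbox{d}x_1\vert x_2)$ is not by itself the Markov property of $\sigma_\# Q$; you need the backward identity $Q_{k,\ldots,n}(\mbox{d}x_k\vert x_{k+1},\ldots,x_n)=Q_{k,k+1}(\mbox{d}x_k\vert x_{k+1})$ for every $1\le k\le n-2$. This does follow by running your Bayes'-rule computation on each tail marginal $Q_{k,\ldots,n}$ --- each of which inherits Markovianity from $Q$ and hence factorizes as $Q_k(\mbox{d}x_k)\prod_{j=k+1}^{n}Q_{j-1,j}(\mbox{d}x_j\vert x_{j-1})$ --- so the gap is one of exposition rather than substance. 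You should also record explicitly why reversal exchanges conditions $2.$ and $3.$: the defining formula for the concatenation $P_{12}\circ P_{23}$ is symmetric under swapping the two factors and reversing coordinates, so condition $2.$ for $\sigma_\# Q$ is precisely condition $3.$ for $Q$, while condition $1.$ is reversal-invariant by the previous step.
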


\begin{proof}
$1.$ obviously implies implies $2.$. Assume now that $2$. holds and let us show that $Q_{2, \ldots, n}$ is Markovian, first note that $Q_{2, \ldots, n-1}$ is Markovian since $Q_{1, \ldots, n-1}$ is and  since $Q=Q_{1, \ldots, n-1} \circ Q_{n-1, n}$ we also have $Q_{2, \ldots, n} =Q_{2, \ldots, n-1} \circ Q_{n-1, n}$ which by uniqueness of the disintegration proves that $Q_{2, \ldots, n}(\mbox{d} x_n \vert x_1, \ldots, x_{n-1})=Q_{n-1, n} (\mbox{d} x_n \vert x_{n-1})$, this proves the Markovianity of $Q_{2, \ldots, n}$. Now observe
\[\begin{split}
Q&=Q_{12}(\mbox{d} x_1 \mbox{d} x_2) Q(\mbox{d} x_3 \ldots \mbox{d} x_n \vert x_1, x_2)\\
&= Q_{12}(\mbox{d} x_1 \mbox{d} x_2) Q_{1, \ldots, n-1} (\mbox{d}x_3\ldots \mbox{d} x_{n-1}\vert x_1, x_2) Q(\mbox{d}x_n \vert x_1, \ldots, x_{n-1})\\
&= Q_{12}(\mbox{d} x_1 \mbox{d} x_2) Q_{1, \ldots, n-1} (\mbox{d}x_3\ldots dx_{n-1}\vert x_2) Q_{n-1, n} (\mbox{d} x_n \vert x_{n-1})\\
&=Q_{12}(\mbox{d} x_1 \mbox{d} x_2) Q_{2, \ldots, n-1} (\mbox{d}x_3\ldots \mbox{d}x_{n-1}\vert x_2) Q_{2, \ldots, n} (\mbox{d} x_n \vert x_2, \ldots, x_{n-1})\\
&=Q_{12} (\mbox{d} x_1 \mbox{d} x_2) Q_{2, \ldots, n}( \mbox{d} x_3 \ldots \mbox{d}x_{n}\vert x_2)=Q_{1,2} \circ Q_{2, \ldots, n}
\end{split}\]
where we have used that $Q_{1, \cdots, n-1}$ is Markov and $Q=Q_{1, \ldots, n-1} \circ Q_{n-1, n}$ in the third line, and the Markovianity of $Q_{2, \ldots, n}$ in the fourth line to replace $ Q_{n-1, n} (\mbox{d} x_n \vert x_{n-1})=Q_{2, \ldots, n} (\mbox{d} x_n \vert x_{n-1})$ with $Q_{2, \ldots, n} (\mbox{d} x_n \vert x_2, \ldots, x_{n-1})$. 

Finally, assume that $3$. holds, and let us fix $3\leq i \leq n$;  thanks to the Markovianity of $Q_{2, \ldots, n}$, we have
\[\begin{split}
Q_{1, \ldots, i} (\mbox{d} x_1 \ldots \mbox{d} x_i)&=Q_{12}(\mbox{d} x_1 \vert x_2) Q_{2, \ldots, i}(\mbox{d} x_2 \ldots \mbox{d} x_i)\\ 
&=Q_{12}(\mbox{d} x_1 \vert x_2) Q_{2, \ldots, i-1}(\mbox{d} x_2 \ldots \mbox{d} x_{i-1}) Q_{i-1, i}(\mbox{d} x_i \vert x_{i-1})
\end{split}\]
which yields, again by uniqueness of disintegrations, that $Q_{1, \ldots, i} (\mbox{d} x_i \vert x_{i-1})=Q_{i-1, i}(\mbox{d} x_i \vert x_{i-1})$ so that $Q$ is Markovian. 
\end{proof}
An immediate consequence of the previous Lemma is that when $Q$ is Markovian then 
\[Q_{1, \ldots, 4}= Q_{1,2} \circ (Q_{2,3} \circ Q_{3, 4})=(Q_{1,2} \circ Q_{2,3})\circ Q_{3,4}\]
so that the successive concatenation operations are associative, hence we shall simply write, without parentheses
\[Q=Q_{1,2} \circ \cdots \circ Q_{n-1, n}.\]

\begin{lem}\label{lemmarkov} Given the product of Polish  spaces $X= X_1 \times X_2 \times \cdots \times X_n$, probability measures $P \in \mathcal{P} ( X) $ and $R \in \mathcal{M}(P)$, let us denote $P_i= (e_i)_{\#}P$, $R_i= (e_i)_{\#}R$, $P_{i, i+1} = (e_i, e_{i+1} )_{\#} P$ and $R_{i, i+1} = (e_i, e_{i+1} )_{\#} R$. Let us suppose that $R$ is Markovian so that $R=R_{1,2}\circ R_{2,3} \circ \cdots \circ R_{n-1,n}$, then we have
  $$H(P|R) \geq \sum_{i=1}^{n-1}  H(P_{i,i+1}|R_{i,i+1}) - \sum_{i=2}^{n-1} H(P_{i}|R_{i}),$$
with equality if and only  i.e. $P$ is itself Markov so that if $P=P_{1,2} \circ P_{2,3} \circ \ldots \circ P_{n-1,n}$.
\end{lem}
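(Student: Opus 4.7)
The plan is to introduce the Markov concatenation $\tilde{P} := P_{1,2} \circ P_{2,3} \circ \cdots \circ P_{n-1,n}$, whose existence is guaranteed since consecutive pair-marginals of $P$ agree on the common single marginal, and which by construction is Markov with the \emph{same} pair-marginals (hence same one-marginals) as $P$. The inequality and equality case will follow from the two identities
\[
H(\tilde{P}|R) = \sum_{i=1}^{n-1} H(P_{i,i+1}|R_{i,i+1}) - \sum_{i=2}^{n-1} H(P_i|R_i), \qquad H(P|R) = H(P|\tilde{P}) + H(\tilde{P}|R),
\]
combined with the Gibbs inequality $H(P|\tilde{P}) \geq 0$, with equality iff $P = \tilde{P}$, i.e.\ iff $P$ is Markov. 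We may assume $H(P|R) < +\infty$ throughout, so that $P \ll R$ and hence $P_i \ll R_i$ and $P_{i,i+1} \ll R_{i,i+1}$ for each $i$.

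\medskip

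For the first identity, I would use the Markov disintegrations
\[ R(dx_1\cdots dx_n) = R_1(dx_1) \prod_{i=1}^{n-1} R_{i,i+1}(dx_{i+1}|x_i), \quad \tilde{P}(dx_1\cdots dx_n) = P_1(dx_1)\prod_{i=1}^{n-1} P_{i,i+1}(dx_{i+1}|x_i), \]
together with the fact that $P_{i,i+1} \ll R_{i,i+1}$, which yields, for $P_i$-a.e.\ $x_i$, $P_{i,i+1}(dx_{i+1}|x_i) \ll R_{i,i+1}(dx_{i+1}|x_i)$ with explicit density $(dP_{i,i+1}/dR_{i,i+1})(x_i, x_{i+1}) / (dP_i/dR_i)(x_i)$. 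Taking the logarithm of the resulting product formula for $d\tilde{P}/dR$ produces a telescoping expression
\[ \log \frac{d\tilde{P}}{dR}(x_1, \ldots, x_n) = \sum_{i=1}^{n-1} \log \frac{dP_{i,i+1}}{dR_{i,i+1}}(x_i, x_{i+1}) - \sum_{i=2}^{n-1} \log \frac{dP_i}{dR_i}(x_i), \]
which integrated against $\tilde{P}$ (whose relevant 2- and 1-marginals are respectively $P_{i,i+1}$ and $P_i$) gives the claimed value of $H(\tilde{P}|R)$.

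\medskip

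For the second identity, the key observation is that the telescoped $\log(d\tilde{P}/dR)$ depends only on consecutive pairs and singletons; since $P$ and $\tilde{P}$ agree on all such marginals,
\[ \int \log\frac{d\tilde{P}}{dR}\,dP = \int \log\frac{d\tilde{P}}{dR}\,d\tilde{P} = H(\tilde{P}|R). \]
Once one knows $P \ll \tilde{P}$, the factorization $\log(dP/dR) = \log(dP/d\tilde{P}) + \log(d\tilde{P}/dR)$ integrated against $P$ yields $H(P|R) = H(P|\tilde{P}) + H(\tilde{P}|R)$, and non-negativity of $H(P|\tilde{P})$ closes the inequality; the equality case $H(P|\tilde{P})=0$ forces $P=\tilde P$, so $P$ is Markov.

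\medskip

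The main technical obstacle is the verification of $P \ll \tilde{P}$ and the rigorous handling of the conditional Radon--Nikodym densities above. The cleanest way around this is to apply the chain rule for relative entropy recursively along the Markov structure of $R$, writing
\[ H(P|R) = H(P_1|R_1) + \sum_{i=1}^{n-1} \int H(P(\,\cdot\,|x_1,\ldots,x_i)\,|\,R_{i,i+1}(\,\cdot\,|x_i))\,dP_{1,\ldots,i}; \]
comparing each $P$-conditional with the corresponding $\tilde{P}$-conditional $P_{i,i+1}(\,\cdot\,|x_i)$ via another application of Gibbs recovers the inequality and identifies the slack with $H(P|\tilde{P})$ as required. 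Everything else (existence of disintegrations, integrability thanks to $H(P|R)<+\infty$, the equality case) is then routine.
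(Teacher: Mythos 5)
Your argument is correct, but it takes a genuinely different route from the paper's. The paper proves the lemma by induction on $n$: the base case $n=3$ disintegrates $P$ and $R$ with respect to the middle coordinate and uses superadditivity of relative entropy over marginals against a product reference measure (with equality iff the conditional is a product), and the inductive step peels off the last coordinate via the chain rule and a convexity/Jensen argument for $\mu\mapsto H(\mu\vert\nu)$. You instead introduce the Markovianization $\tilde P=P_{1,2}\circ\cdots\circ P_{n-1,n}$ and exhibit a Pythagorean decomposition $H(P\vert R)=H(P\vert\tilde P)+H(\tilde P\vert R)$, the second term being computed exactly by the telescoping density $\log\frac{d\tilde P}{dR}=\sum_{i=1}^{n-1}\log\frac{dP_{i,i+1}}{dR_{i,i+1}}-\sum_{i=2}^{n-1}\log\frac{dP_i}{dR_i}$; the crucial observation that this function depends only on consecutive pairs, on which $P$ and $\tilde P$ agree, is what makes the cross term collapse. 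This is cleaner in that it identifies the slack in the inequality explicitly as $H(P\vert\tilde P)$, avoids induction, and makes the equality case ($P=\tilde P$) immediate; the paper's induction, by contrast, requires no construction of an auxiliary measure and handles the measure-theoretic details through repeated disintegration only. Two points in your main route do need the care you flag: (i) the conditional density formula $P_{i,i+1}(dx_{i+1}\vert x_i)=\bigl(\tfrac{dP_{i,i+1}}{dR_{i,i+1}}/\tfrac{dP_i}{dR_i}\bigr)R_{i,i+1}(dx_{i+1}\vert x_i)$ for $P_i$-a.e.\ $x_i$, and (ii) the absolute continuity $P\ll\tilde P$. The latter is not a gap: since $P\ll R$ and $\tilde P$ has the explicit density above with respect to $R$, one checks that $P$ gives zero mass to the set where that density vanishes (each factor $\tfrac{dP_{i,i+1}}{dR_{i,i+1}}$ vanishes only on a $P_{i,i+1}$-null set, and the denominators vanish only on $P_i$-null sets), so $P\ll\tilde P$ indeed holds whenever $H(P\vert R)<+\infty$. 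Your fallback via the chain rule along the Markov structure of $R$ and Jensen is essentially the paper's own proof written without the induction, so either completion is sound.
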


\begin{proof} We will prove the lemma by induction on $n$, starting with the first nontrivial case $n=3$. For $n=3$, we have
\[H(P_{12}\vert R_{12})=H(P_2\vert R_2)+ \int_{X_2} H(P_{12}(  \mbox{d} x_1 \vert x_2) \vert R_{12}(  \mbox{d} x_1\vert x_2)) \mbox{d} P_2(x_2)\]
and 
\[H(P_{23}\vert R_{23})=H(P_2\vert R_2)+ \int_{X_2} H(P_{23}( \mbox{d} x_3\vert x_2) \vert R_{23}(\mbox{d} x_3\vert x_2)) \mbox{d} P_2(x_2)\]
but
\[H(P\vert R)=H(P_2\vert R_2)+ \int_{X_2} H(P( \mbox{d} x_1 \mbox{d} x_3   \vert x_2) \vert R( \mbox{d} x_1 \mbox{d} x_3  \vert x_2) )\mbox{d} P_2(x_2).\]
Let us recall that it follows from the strict convexity of $t\in \R_+ \mapsto t\log(t)$ that if $\gamma$ is a probability measure on a product space with marginals $\gamma_1$ and $\gamma_2$ then $H(\gamma\vert \mu_1 \otimes \mu_2)\ge H(\gamma_1 \vert \mu_1)+H(\gamma_2 \vert \mu_2)$ with equality exactly when $\gamma$ is a product measure $\gamma =\gamma_1 \otimes \gamma_2$.  
Then, observing that for $P_2$ almost every $x_2$, the conditional probability,  $P( \mbox{d} x_1 \mbox{d} x_3 \vert x_2)$ has marginals $P_{12}(  \mbox{d} x_1 \vert x_2)$ and $P_{23}( \mbox{d} x_3 \vert x_2)$ and that since $R$ is Markov $R( \mbox{d} x_1 \mbox{d} x_3 \vert x_2)=R_{12}( \mbox{d} x_1\vert x_2) \otimes  R_{23}(  \mbox{d} x_3 \vert x_2)$, we have 
\[ \begin{split}H(P(  \mbox{d} x_1 \mbox{d} x_3  \vert x_2) \vert R(  \mbox{d} x_1 \mbox{d} x_3\vert x_2) )\ge H(P_{12}(  \mbox{d} x_1 \vert x_2) \vert R_{12}( \mbox{d} x_1 \vert x_2)) \\+  H(P_{23}( \mbox{d} x_3\vert x_2) \vert R_{23}(\mbox{d} x_3\vert x_2))
\end{split}\]
with an equality if and only if $P( \mbox{d} x_1 \mbox{d} x_3 \vert x_2)=P_{12}( \mbox{d} x_1 \vert x_2) P_{23}( \mbox{d} x_3\vert x_2)$ i.e. $P$ is Markov. This proves the claim for $n=3$. Assume the claim is satisfied for $n$ and consider $P \in \mathcal{P} ( X_1\times \cdots \times X_{n+1})$ and $R$ a Markov probability on $X_1\times \cdots \times X_{n+1}$, we then have 
\[\begin{split}
H(P\vert R)&=H(P_{1, \cdots, n} \vert R_{1, \cdots, n})+\\
& \int_{X_1\times \cdots \times X_n} H(P(\mbox{d} x_{n+1} \vert x_1, \cdots x_n)\vert R(\mbox{d} x_{n+1}\vert x_1, \cdots x_n)) \mbox{d} P_{1, \cdots, n} (x_1, \cdots, x_n).
\end{split}\]
From the validity of the claim for $P_{1, \cdots, n}$ and $R_{1, \cdots, n}$, we have
\begin{equation}\label{casn}
H(P_{1, \cdots, n} \vert R_{1, \cdots, n}) \ge  \sum_{i=1}^{n-1}  H(P_{i,i+1}|R_{i,i+1}) - \sum_{i=2}^{n-1} H(P_{i}|R_{i}).
\end{equation}
So  we are left to show that
\[\begin{split}
&\int_{X_1\times \cdots \times X_n} H(P( \mbox{d} x_{n+1} \vert x_1, \cdots x_n)\vert R(\mbox{d} x_{n+1} \vert x_1, \cdots x_n)) \mbox{d} P_{1, \cdots, n} (x_1, \cdots, x_n)\\
&\ge H(P_{n, n+1}\vert R_{n, n+1}) -H(P_n \vert R_n)\\
&=\int_{X_n} H(P_{n, n+1}( \mbox{d} x_{n+1}  \vert x_n)\vert R_{n, n+1}( \mbox{d} x_{n+1}  \vert x_n)) \mbox{d} P_n(x_n).
\end{split}\]
Now we observe that since $R$ is Markov $R(\mbox{d} x_{n+1}  \vert x_1, \cdots x_n)= R_{n,n+1}(\mbox{d} x_{n+1}  \vert x_n)$,  we then write
\[\begin{split}
&\int_{X_1\times \cdots \times X_n} H(P( \mbox{d} x_{n+1} \vert x_1, \cdots x_n)\vert R_{n,n+1}(\mbox{d} x_{n+1} \vert  x_n) \mbox{d} P_{1, \cdots, n} (x_1, \cdots, x_n)\\
&= \int_{X_n} \Big( \int_{X_1\times \cdots \times X_{n-1}} H(P( \mbox{d} x_{n+1} \vert x_1, \cdots x_n)\vert R(\mbox{d} x_{n+1} \vert x_n) ) \mbox{d} P_{1, \cdots, n} (x_1, \cdots, x_{n-1} \vert x_n) \Big) \mbox{d} P_n(x_n)
\end{split}\]
using the convexity of $H(. \vert R(\mbox{d} x_{n+1} \vert x_n))$ and the fact that
\[\int_{X_1 \times \cdots \times X_{n-1}}  P(\mbox{d} x_{n+1} \vert x_1\cdots, x_n) \mbox{d} P_{1, \cdots, n} (x_1, \cdots, x_{n-1} \vert x_n) =P_{n, n+1}(\mbox{d}x_{n+1} \vert x_n)\]
we deduce
\[\begin{split}
  \int_{X_1\times \cdots \times X_{n-1}} H(P( \mbox{d} x_{n+1} \vert x_1, \cdots x_n)\vert R(\mbox{d} x_{n+1} \vert x_n)) \mbox{d} P_{1, \cdots, n} (x_1, \cdots, x_{n-1} \vert x_n)\\
 \ge H(P_{n, n+1}( \mbox{d} x_{n+1}  \vert x_n)\vert R_{n, n+1}( \mbox{d} x_{n+1}  \vert x_n))
\end{split}\]
integrating with respect to $\mbox{d} P_n(x_n)$ gives the desired inequality. Now in the equality case, there should be an equality in \pref{casn} so that $P_{1, \cdots, n}$ should be Markov, but there should also be an inequality in the convexity inequality for the relative entropy above which implies that $P(\mbox{d} x_{n+1} \vert x_1\cdots, x_n)=P_{n, n+1}(\mbox{d} x_{n+1} \vert x_n)$ and these two conditions imply that $P$ is Markov thanks to Lemma~\label{markovfromgluing}.
\end{proof}

As a consequence of Lemma \ref{lemmarkov} and the Markovianity of the reversible Wiener measure we deduce

\begin{coro}\label{snegalen}
Let $(\mu_0, \cdots, \mu_N)\in (\PP_2(\R^d))^{N+1}$, one has
\[\begin{split}
\SS^N(\mu_0, \cdots, \mu_N) &=\sum_{i=0}^{N-1} S_{\frac{T}{N}} (\mu_i, \mu_{i+1}) -\sum_{i=1}^{N-1} \Ent(\mu_i)\\
&=\E^N(\mu_0, \cdots, \mu_N)+\Ent(\mu_0).
\end{split}\]
\end{coro}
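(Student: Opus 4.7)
The plan is to prove the two equalities by combining Lemma \ref{lemmarkov} (applied with $R=R^N$) with the noisy Benamou--Brenier formula in Theorem \ref{noisybb}, in two steps: a lower bound obtained via Markov disintegration, and a matching upper bound obtained by Markov concatenation of the pairwise optimal Schr\"odinger plans.

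\textbf{Step 1 (lower bound for $\SS^N$).} Fix $(\mu_0,\ldots,\mu_N)$ and any admissible $\gamma^N\in \Pi(\mu_0,\ldots,\mu_N)$ in the static formulation \eqref{discrshrosta}. The reference $R^N$, although only $\sigma$-finite, is Markov in the sense that $R_{i-1,i,i+1}^N=R_{i-1,i}^N\circ R_{i,i+1}^N$ (because the heat kernel is a Markov transition kernel), so the computations in Lemma \ref{lemmarkov} extend verbatim to $R^N$; in practice one passes to the equivalent Markov probability $\widetilde R^N$ of \eqref{tilRNw} and observes that the $|x_0|^2$-correction is harmless since $\mu_0$ has finite second moment. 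Applying (the statement of) Lemma \ref{lemmarkov} gives
\[H(\gamma^N\vert R^N)\ \ge\ \sum_{i=0}^{N-1} H(\gamma^N_{i,i+1}\vert R^N_{i,i+1})\ -\ \sum_{i=1}^{N-1} H(\mu_i\vert R^N_i).\]
Because $R^N_i=\Leb^d$, one has $H(\mu_i\vert R^N_i)=\Ent(\mu_i)$, and because $\gamma^N_{i,i+1}\in \Pi(\mu_i,\mu_{i+1})$ the static Schr\"odinger formulation \eqref{schrosta} yields $H(\gamma^N_{i,i+1}\vert R^N_{i,i+1})\ge S_{T/N}(\mu_i,\mu_{i+1})$. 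Taking the infimum over $\gamma^N$ gives the inequality
\[\SS^N(\mu_0,\ldots,\mu_N)\ \ge\ \sum_{i=0}^{N-1} S_{T/N}(\mu_i,\mu_{i+1})\ -\ \sum_{i=1}^{N-1} \Ent(\mu_i).\]

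\textbf{Step 2 (matching upper bound via concatenation).} For each $i=0,\ldots,N-1$ choose an optimal plan $\gamma^*_{i,i+1}\in \Pi(\mu_i,\mu_{i+1})$ for the static problem \eqref{schrosta} at time step $T/N$. Since consecutive plans share the marginal $\mu_i$, their successive Markov-concatenation
\[\gamma^*\ :=\ \gamma^*_{0,1}\ \circ\ \gamma^*_{1,2}\ \circ\ \cdots\ \circ\ \gamma^*_{N-1,N}\ \in\ \Pi(\mu_0,\ldots,\mu_N)\]
is well defined and Markovian by Lemma \ref{markovfromgluing}. Now Lemma \ref{lemmarkov}, whose equality case is exactly the Markovianity of $P$, applied to $P=\gamma^*$ and $R=R^N$ gives the reverse inequality
\[H(\gamma^*\vert R^N)\ =\ \sum_{i=0}^{N-1} S_{T/N}(\mu_i,\mu_{i+1})\ -\ \sum_{i=1}^{N-1} \Ent(\mu_i),\]
which, combined with Step 1, establishes the first equality of the corollary.

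\textbf{Step 3 (second equality via noisy Benamou--Brenier).} Using Theorem \ref{noisybb}, $S_{T/N}(\mu_i,\mu_{i+1})=\FP_{T/N}(\mu_i,\mu_{i+1})+\Ent(\mu_i)$ for every $i$. Summing and telescoping,
\[\sum_{i=0}^{N-1} S_{T/N}(\mu_i,\mu_{i+1})-\sum_{i=1}^{N-1}\Ent(\mu_i)\ =\ \sum_{i=0}^{N-1}\FP_{T/N}(\mu_i,\mu_{i+1})+\Ent(\mu_0)\ =\ \E^N(\mu_0,\ldots,\mu_N)+\Ent(\mu_0),\]
by definition \eqref{discreteFP} of $\E^N$. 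This gives the second stated identity.

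\textbf{Main obstacles.} The one genuinely delicate point is the extension of Lemma \ref{lemmarkov} to the $\sigma$-finite reference $R^N$: the disintegration identity $H(P\vert R)=H(P_i\vert R_i)+\int H(P(\cdot\vert x_i)\vert R(\cdot\vert x_i))\,dP_i(x_i)$ and the strict-convexity argument behind the Markov bound both need justification in this case. The assumption $\rho_0\in \PP_2\cap L^1$ with $\Ent(\rho_0)<+\infty$, the lower bound on $\Ent$ by the second moment mentioned after \eqref{fisherinfo}, and the $V(x)=\pi|x|^2$ trick from the footnote together allow one to reduce to the probability measure $\widetilde R^N$ and absorb the correction term, after which the proof of Lemma \ref{lemmarkov} applies without modification. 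Everything else is bookkeeping of the telescoping sum.
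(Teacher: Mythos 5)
Your proposal is correct and follows essentially the same route as the paper: the paper likewise passes to the equivalent Markov probability $\tilR^N$ of \eqref{tilRNw} to handle the $\sigma$-finiteness, applies Lemma \ref{lemmarkov} (whose inequality gives your Step 1 and whose equality case for Markov concatenations gives your Step 2), identifies $H(\gamma_{i,i+1}\vert \tilR^N_{i,i+1})-H(\mu_i\vert\tilR^N_i)$ with $H(\gamma_{i,i+1}\vert R^N_{i,i+1})-\Ent(\mu_i)$, and concludes via \pref{lienfps}. You merely make explicit the lower/upper bound split that the paper compresses into a single appeal to the lemma.
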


\begin{proof}
Let us define the Markov probability measure $\tilR^N$ by \pref{tilRNw}. Then we have
\[\SS^N(\mu_0, \cdots, \mu_N)=\inf_{\gamma^N\in \Pi(\mu_0, \cdots, \mu_N)} H(\gamma^N \vert \tilR^N)-H(\mu_0\vert \tilR_0)+ \Ent(\mu_0)\]
The Markovianity of $\tilR^N$ and Lemma \ref{lemmarkov} then give
\[\SS^N(\mu_0, \cdots, \mu_N)=  \Ent(\mu_0) +
\inf_{\gamma^N\in \Pi(\mu_0, \cdots, \mu_N)}  \Big\{ \sum_{i=0}^{N-1} (H(\gamma_{i, i+1}^N \vert \tilR_{i, i+1}^N)-H(\mu_i \vert \tilR^N_i)\Big\}\]
Now we observe that 
\[\begin{split}
R^N_{i, i+1}(\mbox{d} x_i \mbox{d} x_{i+1})= \mbox{d} x_i R^N_{i,i+1}(\mbox{d} x_{i+1} \vert x_i), \\
\tilR^N_{i, i+1}(\mbox{d} x_i \mbox{d} x_{i+1})= \tilR^N_{i}(x_i) \mbox{d} x_i R^N_{i,i+1}(\mbox{d} x_{i+1} \vert x_i)
\end{split}\]
with $\tilR^N_{i}$ being Gaussian so that  $\log(\tilR_i^N)\in L^1(\mu_i)$, we deduce
\[\begin{split}
H(\gamma_{i, i+1}^N \vert \tilR_{i, i+1}^N)-H(\mu_i \vert \tilR_i)&=H(\gamma_{i, i+1}^N \vert R_{i, i+1}^N)- \int_{\R^d} \log(\tilR_i^N) \mu_i\\
&-\Ent(\mu_i)+ \int_{\R^d} \log(\tilR_i^N) \mu_i\\
&=H(\gamma_{i, i+1}^N \vert R_{i, i+1}^N)-\Ent(\mu_i).
\end{split}\]
Hence 
\[\begin{split}
\SS^N(\mu_0, \cdots, \mu_N)&=  \Ent(\mu_0)  +\sum_{i=0}^{N-1} \Big( \inf_{\gamma\in \Pi(\mu_i, \mu_{i+1})} H(\gamma \vert R_{i, i+1}^N) -\Ent(\mu_i) \Big)\\
&=\sum_{i=0}^{N-1} S_{\frac{T}{N}} (\mu_i, \mu_{i+1}) -\sum_{i=1}^{N-1} \Ent(\mu_i)
\end{split}\]
 the second identity in the Lemma  then directly follows from the first one and \pref{lienfps}. 
\end{proof}

\section{$\Gamma$-convergence and equivalence}\label{sec-gamma}

\subsection{$\Gamma$-convergence}

We state now some lemmas which  will be useful for the proof of
the $\Gamma$-convergence result, stated in Theorem \ref{gammaconvfp}.

\begin{lem}\label{lem:dyna} Let $\{ \rho_t \}_{0 \leq t \leq T} $ be a curve of probability measures such that $\partial_t \rho_t + \nabla \cdot ( v_t \rho_t) = \frac {1}2 \Delta \rho_t$ with $\iint_0^T |v_t|^2 \, d \rho_t =E < \infty$. Let us suppose also that $\int_{\R^d} |x|^2 \, d \rho_0 = c_0 < \infty$ is finite. Then we have:

\begin{itemize}

\item[(i)] $\int |x|^2 \, d \rho_t \leq C(T, c_0, E)$ for all $0\leq t \leq T$.

\item[(ii)] $\Ent ( \rho_t) \in L^{\infty} ( \tau, T)$ for all $\tau >0$.

\item[(iii)] $ I(\rho_t) \in L^1(\tau, T)$ and $\Ent (\rho_t) \in W^{1,1}(\tau, T)$ for all $\tau>0$.

\item[(iv)] $ \iint_{\tau}^T | v_t|^2 \, d \rho_t = \iint_{\tau}^T |w_t|^2 \, d\rho_t + \frac{ 1}4 \int_{\tau}^T I(\rho_t) \, dt +  (\Ent ( \rho_T) - \Ent(\rho_{\tau}))$, where $w_t = v_t - \frac 1 2 \nabla \log (\rho_t)$.
\item[(v)]  $ \W_2^2 (\rho_{\tau} ,  \rho_T) \leq (T-\tau) ( E +  (\Ent( \rho_{\tau} ) - \Ent (\rho_T)) ) $. In particular we have that $\rho_t  \in H^1([\tau, T ]; ( \mathcal{P}_2(\R^d) ,\W_2) )$. 
\end{itemize}

Moreover if we add the assumption $\Ent(\rho_0) < \infty$ we can take also $\tau=0$ in (ii), (iii), (iv) and (v).

\end{lem}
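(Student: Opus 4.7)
The plan is to handle the items in the order $(i)$, then $(iv)$, then derive $(ii)$, $(iii)$ and $(v)$ as corollaries.

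For $(i)$, test the Fokker--Planck equation against the (truncated) function $|x|^2$: the Laplacian contributes $d$ and integration by parts gives
$$\tfrac{d}{dt}\!\int_{\R^d} |x|^2 \, d\rho_t \;=\; d + 2\!\int_{\R^d} x\cdot v_t \, d\rho_t \;\le\; d + \!\int_{\R^d} |x|^2 \, d\rho_t + \!\int_{\R^d} |v_t|^2 \, d\rho_t,$$
so Gronwall on $[0,T]$ yields $\int|x|^2 \, d\rho_t \le (c_0+dT+E)e^T =: C(T,c_0,E)$. The crucial observation for the rest is to rewrite Fokker--Planck as the pure continuity equation $\partial_t \rho_t + \nabla\!\cdot\!(w_t \rho_t)=0$ with the \emph{osmotic velocity} $w_t := v_t - \tfrac12 \nabla \log \rho_t$.

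For $(iv)$, start from the pointwise algebraic decomposition $|v_t|^2 = |w_t|^2 + w_t\cdot\nabla\log\rho_t + \tfrac14|\nabla\log\rho_t|^2$, integrate against $\rho_t$ (the cross term becomes $\int w_t\cdot\nabla\rho_t$ and the last term $\tfrac14 I(\rho_t)$) and then integrate in $t$; the chain rule for the entropy along the continuity equation with velocity $w_t$ identifies $\int_\tau^T\!\!\int w_t\cdot\nabla\rho_t \, dt = \Ent(\rho_T)-\Ent(\rho_\tau)$. To justify this rigorously (and to ensure $I(\rho_t)$ and $\int|w_t|^2 d\rho_t$ are finite a.e.\ in $t$), I would regularize by spatial convolution with the heat kernel: set $\rho_t^\varepsilon := \rho_t\ast P_\varepsilon$, $v_t^\varepsilon := (v_t\rho_t)\ast P_\varepsilon /\rho_t^\varepsilon$. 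Then $\rho_t^\varepsilon$ is smooth, obeys Fokker--Planck with drift $v_t^\varepsilon$, and by Jensen/convexity (and Stam's inequality for Fisher information) each of the kinetic energy $\iint|v_t^\varepsilon|^2 d\rho_t^\varepsilon$, the Fisher information $I(\rho_t^\varepsilon)$ and the entropy $\Ent(\rho_t^\varepsilon)$ is bounded above by its unregularized counterpart; moreover $\Ent(\rho_t^\varepsilon)<+\infty$ for every $\varepsilon>0$ because $\rho_t^\varepsilon \le \|P_\varepsilon\|_\infty$. The smooth analogue of $(iv)$ holds by direct computation; passing to $\varepsilon\to 0$, lower semicontinuity of Fisher information and entropy (via $\W_2$-convergence together with the moment bound from $(i)$) combined with the matching Jensen upper bounds yields $(iv)$ as an equality. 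An alternative is to invoke the framework of Gigli--Tamanini \cite{GigTam18a} or the dissipation identity in \cite{GLR}.

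Assuming $(iv)$ and using the non-negativity of $\iint|w_t|^2 d\rho_t$ and $I$, one immediately reads off
$$\Ent(\rho_T)-\Ent(\rho_\tau) \;\le\; E, \qquad \tfrac14\!\int_\tau^T \!\! I(\rho_t)\,dt \;\le\; E + \Ent(\rho_\tau) - \Ent(\rho_T).$$
The linear lower bound $\Ent(\rho_t)\ge -C(1+\int|x|^2 d\rho_t)$ recalled via \pref{entrminus-bound} together with $(i)$ bounds the entropy from below uniformly on $[0,T]$, and finiteness of $\Ent(\rho_\tau)$ for every $\tau>0$ follows from the smoothing effect of the heat semigroup embedded in Fokker--Planck (every $\mu\in\PP_2(\R^d)$ satisfies $\Ent(\mu\ast P_\tau)<+\infty$, and the drift contribution is controlled by the first half of $(iv)$ applied on $[\tau/2,\tau]$ at the approximation level). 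Thus $\Ent\in L^\infty(\tau,T)$ and $I(\rho_\cdot)\in L^1(\tau,T)$, i.e.\ $(ii)$ and the first half of $(iii)$. For the $W^{1,1}$ regularity of $t\mapsto\Ent(\rho_t)$, differentiate along the continuity equation, $\tfrac{d}{dt}\Ent(\rho_t)=\int w_t\cdot\nabla\rho_t$, and bound by Cauchy--Schwarz $|\int w_t\cdot\nabla\rho_t| \le (\int|w_t|^2 d\rho_t)^{1/2} I(\rho_t)^{1/2}$, both factors being square integrable on $[\tau,T]$ by $(iv)$. Finally $(v)$ follows from the Benamou--Brenier bound applied to the continuity equation with velocity $w_t$,
$$\W_2^2(\rho_\tau,\rho_T)\;\le\;(T-\tau)\iint_\tau^T|w_t|^2 d\rho_t \;\le\;(T-\tau)\bigl(E + \Ent(\rho_\tau)-\Ent(\rho_T)\bigr),$$
(using $(iv)$ and dropping the non-negative Fisher term); applied on arbitrary subintervals this produces the $H^1$-regularity in $\W_2$. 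When $\Ent(\rho_0)<+\infty$, every step above that required $\tau>0$ only did so to ensure $\Ent(\rho_\tau)<+\infty$, so the case $\tau=0$ is immediate.

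The main obstacle is the rigorous justification of $(iv)$: naive lower-semicontinuity gives only the inequality $\iint|v|^2 d\rho \ge \text{RHS}$, and one must simultaneously produce matching \emph{upper} bounds on each mollified term (kinetic energy, Fisher information, entropy at the endpoints) via Jensen-type estimates for convolution in order to pass from the smooth-flow identity to equality in the limit $\varepsilon\to 0$. A closely related subtlety is the propagation of finiteness of $\Ent(\rho_\tau)$ for small $\tau>0$ when $\Ent(\rho_0)=+\infty$, which has to be extracted from the parabolic smoothing embedded in the Fokker--Planck equation rather than assumed a priori.
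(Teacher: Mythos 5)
Your items $(i)$, $(iv)$, $(v)$ and the derivation of $(iii)$ from $(iv)$ follow the same route as the paper (mollify, use joint convexity of $(m,s)\mapsto |m|^2/s$ to get $\varepsilon$-uniform upper bounds, write $v=w+\tfrac12\nabla\log\rho$ and expand), so that part is sound in outline. The genuine gap is in $(ii)$, and it is not a technicality you can defer: your logical ordering ``prove $(iv)$ first, read off $(ii)$ as a corollary'' cannot be closed. The identity $(iv)$ on $[\tau,T]$ only makes sense once you know $\Ent(\rho_\tau)$ is finite (otherwise the right-hand side is $\infty-\infty$ at the limit $\varepsilon\to0$), and at the mollified level the only upper bound you have on $\Ent(\rho_\tau^\varepsilon)$ that survives $\varepsilon\to 0$ is the Jensen bound $\Ent(\rho_\tau^\varepsilon)\le\Ent(\rho_\tau)$ --- which is vacuous precisely when $\Ent(\rho_\tau)$ is not yet known to be finite. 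Your proposed fix (``$\Ent(\mu\ast P_\tau)<+\infty$ for any $\mu\in\PP_2$, and the drift contribution is controlled by the first half of $(iv)$ on $[\tau/2,\tau]$'') is circular: $\rho_\tau$ is not $\rho_0\ast P_\tau$, and controlling the discrepancy via $(iv)$ presupposes the very finiteness you are trying to establish.

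The missing idea is the paper's logarithmic Sobolev step, which proves $(ii)$ \emph{before} and independently of $(iv)$. Setting $f(t)=e^{-\frac2d\Ent(\rho_t^\delta)}$, one computes $f'(t)=-\frac{2f}{d}\bigl(\int\nabla\rho^\delta\cdot v^\delta-\frac12\int|\nabla\rho^\delta|^2/\rho^\delta\bigr)$, applies Young's inequality to absorb the cross term, and then the optimal Euclidean LSI $\Ent(\rho)\le\frac d2\log\bigl(\frac{2}{\pi e d}\int|\nabla\sqrt\rho|^2\bigr)$ to bound $f'$ from below by $\pi e-\frac{2f}{d}\int|v^\delta|^2\,d\rho^\delta$. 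Integrating gives the $\delta$-uniform, quantitative instantaneous regularization $\Ent(\rho_t^\delta)\le-\frac d2\log(\pi e t)+\iint_0^t|v_s^\delta|^2\,d\rho_s^\delta$, valid with no assumption on $\Ent(\rho_0)$. This is the uniform upper bound that makes the passage to the limit in your $(iv)$ legitimate and that your argument lacks a substitute for. (Two smaller points: the cross term in $(iii)$ is handled by $v\cdot\nabla\rho\le|v|^2\rho+\frac14|\nabla\rho|^2/\rho$ rather than Cauchy--Schwarz, which is what yields the $L^1$ bound on $I$ without first knowing $\iint|w|^2\,d\rho<\infty$; and the mollifier should be taken with finite second moment so that the Gronwall argument in $(i)$ survives the regularization.)
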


\begin{proof} Let us take $\delta >0$ and let us consider $\rho^{\delta}_t= \eta_{\delta} * \rho_t$ a convolution with a $C^{\infty}$ (and everywhere strictly positive) kernel $\eta_{\delta}=\eta(x/\delta)/\delta^d$ with a finite second moment. Denoting $m^{\delta}_t= \eta_{\delta} * (v_t \rho_t) $ we have that $\partial_t \rho_t^{\delta} + \nabla \cdot ( m^{\delta}_t) = \frac 1 2 \Delta \rho^{\delta}_t$. In particular setting $v_{t}^\delta = m^{\delta}_t /\rho^{\delta}_t$, we have, by convexity of the function $(v,s) \mapsto \frac {|v|^2}s$

$$ \iint_0^T | v^{\delta}_t|^2 \, d \rho_t^{\delta} = \iint_0^T \frac{| (v_t\rho_t) *\eta_{\delta}|^2}{\rho_t*\eta_{\delta}} \, d x  \leq   \iint_0^T \frac{| v_t \rho_t |^2}{\rho_t} \, d x = E.$$

Moreover we can compute, letting $c_t^{\delta} = \int |x|^2 \, d \rho^{\delta}_t$:
$$ \frac{ d}{dt} c_t^{\delta} = 2 \int \langle x , v_t^{\delta} \rangle \, d \rho^{\delta}_t +  2d \leq  c_t^{\delta} + \int |v_t^{\delta}|^2\, d \rho_t^{\delta} + 2d $$ 
$$ \frac{ d}{dt} ( c_t^{\delta} e^{-t} ) \leq  e^{-t} \left(\int |v_t^{\delta}|^2\, d \rho_t^{\delta} + 2d \right)  \leq  \int |v_t^{\delta}|^2\, d \rho_t^{\delta} + 2d. $$ 
So we get 
\begin{equation}\label{moment-bound}
c_t^{\delta} \leq  e^{t} ( c^{\delta}_0 + E+  2d t ) \leq  e^{T}( c_0+  \delta^2 \int \vert x\vert^2 \eta+ 2 \delta (\int x \mbox{d} \rho_0)(\int x \eta) +E+ 2d T)
\end{equation}
now we can let $\delta$ to $0$ to obtain $(i)$. Moreover, we can obtain a bound on the entropy. Indeed,  $e^{-\pi |x|^2}$ is the density of a probability measure and so  for any $\rho$ bounded density of a probability measure with finite second moment we have:

$$0 \leq  H ( \rho | e^{-\pi \vert x\vert^2} )   = \pi \int |x|^2 \, d \rho + \int \rho \log (\rho ) \, d x$$
\begin{equation}\label{entrminus-bound}
 \int \rho \log (\rho) \, dx \geq -\pi \int |x|^2 \, d \rho.
 \end{equation}

Taking the function $f(t) = e^{- \frac{2}{d} \Ent ( \rho_t^{\delta})}$, we have:

\begin{align*}
\frac{ d}{dt} f(t) &= - \frac {2f(t)} d  \left( \int \nabla \rho_t^{\delta} \cdot v_t^{\delta} dx - \frac 1 2 \int \frac{|\nabla  \rho_t^{\delta}|^2}{\rho_t^{\delta}} \, dx \right )  \\
 & \geq  - \frac {2f(t)} d \left(  \int |v^{\delta}_t|^2\, d \rho_t^{\delta}  + \frac 1 4  \int   \frac{|\nabla  \rho_t^{\delta}|^2}{\rho_t^{\delta} }  \, dx - \frac 1 2 \int  \frac{|\nabla  \rho_t^{\delta}|^2}{\rho_t^{\delta}} \, dx \right ) \\
 & =  - \frac {2f(t)} d \left(  \int |v^{\delta}_t|^2\, d \rho_t^{\delta}  -   \int     |\nabla \sqrt{  \rho_t^{\delta}} |^2     \, dx \right )\\
  & \geq \pi e - \frac {2f(t)}{d} \int |v_t^{\delta}|^2\, d \rho_t^{\delta},
\end{align*}

where in the last passage we used the Log-Sobolev inequality\footnote{More precisely, the optimal euclidean Logarithmic Sobolev inequality  in the form 
\[\Ent(\rho)\leq \frac{d}{2} \log \Big(\frac{2}{\pi e d} \int_{\R^d} \vert \nabla \sqrt{\rho}\vert^2 \Big), \]
see Weissler \cite{Weissler}.}. So we can conclude $f(t) \geq  e \pi  t  e^{-\frac 2{d}\iint_0^t |v_s^{\delta}|^2 \, d \rho_s^{\delta} } $, that is 
$$\Ent ( \rho_t^{\delta}) \leq - \frac d 2 \log \left(  \pi e t  \right) +  \iint_0^t |v_s^{\delta}|^2 \, d \rho_s^{\delta}. $$
This proves $(ii)$. In order to establish $(iii)$, we first notice that 

\begin{equation}\label{eqn:entropyw11} \Ent (\rho_r^{\delta}) -\Ent(\rho_s^{\delta}) = \iint_s^r v_t^{\delta} \cdot \nabla \rho_t^{\delta} \, dxdt - \frac 1 2 \iint_s^r \frac {|\nabla \rho_t^{\delta}|^2}{\rho_t^{\delta}} \, dxdt. \end{equation}
In particular taking  $s=\tau$ and $r=T$ and using $v_t^{\delta} \cdot \nabla \rho_t^{\delta} \leq  |v_t^{\delta}|^2 \rho_t^{\delta} + \frac 1 4 \frac {|\nabla \rho_t^{\delta}|^2}{\rho_t^{\delta}}$ we get, thanks to the $L^{\infty}$ bound on the entropy:

\begin{equation}\label{eqn:Ienergy} \frac 1 4 \iint_\tau^T \frac {|\nabla \rho_t^{\delta}|^2}{\rho_t^{\delta}} \, dxdt \leq E + \Ent(\rho_\tau^{\delta}) - \Ent(\rho_T^{\delta}) \leq C(\tau, T, E, c_0),
\end{equation}
which proves that $I(\rho_t)$ is in $L^1(\tau, T)$. Equation \eqref{eqn:entropyw11} also yields

\begin{equation} \label{eqn:entw11}
| \Ent(\rho_t^{\delta}) - \Ent(\rho_s^{\delta})| \leq \int_s^t \left(2 I(\rho_t^{\delta}) + \frac 1{4} \int_{\R^d} \vert v_t^{\delta}\vert^2 \rho_t^{\delta}\right) \, dt,
\end{equation}
by convexity of $I$ we have $0\le I(\rho^\delta_t)\le I(\rho_t)\in L^{1} (\tau, T)$ and a similar convexity argument gives an integrable bound for the kinetic energy as well, hence
$\Ent(\rho_t) \in W^{1,1} (\tau, T)$ for every $\tau >0$. Statements $(iv)$ and $(v)$ easily follow. Finally, whenever $\Ent(\rho_0) < \infty$ from equation \eqref{eqn:Ienergy} we get immediately $I(\rho_t) \in L^1(0,T)$ and then from  \eqref{eqn:entw11} we have $\Ent(\rho_t) \in W^{1,1} (0, T)$. 

\end{proof}

%
%

\begin{lem}\label{lem:reparametrization} Let $\mu$ and $\nu$ be two fixed probability measures such that $E:= 2 \FP_T(\mu, \nu) < \infty$. Let $c_0 = \int |x|^2 \, d \mu <\infty$ and let us fix $T/2 \leq T'\leq 2T$. Then we have
\begin{itemize}
\item[(i)] $\Ent(\nu) \leq \Ent(\mu) + E$;
\item[(ii)] $\Ent(\nu) \geq - \pi e^{T}( c_0 +E+ 2d T)$;
\item[(iii)] $ 2 \FP_{T'} (\mu, \nu) \leq  5 E +  (\Ent(\mu) - \Ent(\nu)) $.
\end{itemize}
\end{lem}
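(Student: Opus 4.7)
My plan is to derive (i) and (ii) as direct consequences of the Benamou--Brenier-type identity and the quantitative bounds already established in Lemma \ref{lem:dyna}, and to prove (iii) by linearly reparametrizing in time an almost-optimal Fokker--Planck curve joining $\mu$ to $\nu$ on $[0,T]$ while correcting the drift so as to restore the Fokker--Planck form. The main tool throughout is Lemma \ref{lem:dyna}(iv), written with the ``current part'' $w_t := v_t - \tfrac{1}{2}\nabla\log\rho_t$ of the velocity.

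For (i), I pick $(\rho,v)$ joining $\mu$ to $\nu$ on $[0,T]$ with $\iint_0^T |v|^2\,d\rho_t\,dt \leq E+\eps$. The case $\Ent(\mu)=+\infty$ is vacuous, so I assume $\Ent(\mu)<\infty$; then Lemma \ref{lem:dyna}(iv) with $\tau=0$ reads
$$\iint_0^T |v|^2 d\rho = \iint_0^T |w|^2 d\rho + \tfrac{1}{4}\int_0^T I(\rho_t)\,dt + (\Ent(\nu)-\Ent(\mu)),$$
and discarding the two non-negative summands and letting $\eps\to 0$ gives (i). For (ii), I combine the pointwise lower bound \eqref{entrminus-bound}, namely $\Ent(\nu)\geq -\pi\int|x|^2 d\nu$, with the second-moment estimate of Lemma \ref{lem:dyna}(i) applied at $t=T$, which supplies $\int|x|^2 d\nu \leq e^T(c_0+E+2dT)$ after letting $\eps\to 0$.

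For (iii), I set $\lambda := T/T' \in [1/2,2]$ and define $\tilde\rho_s := \rho_{\lambda s}$ for $s\in[0,T']$, so that $\tilde\rho_0=\mu$ and $\tilde\rho_{T'}=\nu$. Identifying $\partial_s \tilde\rho_s = \lambda\bigl(\tfrac{1}{2}\Delta - \dive(v_{\lambda s}\cdot)\bigr)\tilde\rho_s$ with $\tfrac{1}{2}\Delta\tilde\rho_s - \dive(\tilde v_s\,\tilde\rho_s)$ forces
$$\tilde v_s \;=\; \tfrac{1-\lambda}{2}\nabla\log\tilde\rho_s + \lambda\,v_{\lambda s} \;=\; \tfrac{1}{2}\nabla\log\tilde\rho_s + \lambda\,w_{\lambda s}.$$
Expanding $|\tilde v_s|^2$, integrating against $\tilde\rho_s\,ds$, and changing variable back to $t=\lambda s$ then gives
$$\iint_0^{T'} |\tilde v|^2 d\tilde\rho_s\,ds \;=\; \lambda A + \frac{B}{\lambda} + (\Ent(\nu)-\Ent(\mu)), \qquad A := \iint_0^T |w|^2 d\rho,\ \ B := \tfrac{1}{4}\int_0^T I(\rho_t)\,dt,$$
where the cross-term produces exactly $\Ent(\nu)-\Ent(\mu)$ because, by definition of $w$, $\rho$ satisfies the pure continuity equation $\partial_t\rho+\dive(w\rho)=0$, so $\tfrac{d}{dt}\Ent(\rho_t)=\int\nabla\log\rho\cdot w\,d\rho$. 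Combining this with $A+B = \iint|v|^2 d\rho - (\Ent(\nu)-\Ent(\mu)) \leq E+\eps+(\Ent(\mu)-\Ent(\nu))$ from Lemma \ref{lem:dyna}(iv) and the bound $\lambda,\lambda^{-1}\leq 2$ produces $2\FP_{T'}(\mu,\nu) \leq 2E + (\Ent(\mu)-\Ent(\nu))$, comfortably below the claimed $5E+(\Ent(\mu)-\Ent(\nu))$.

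The main technical point will be to justify rigorously that $\tilde v$ is an admissible drift: this reduces to the integrability of $I(\rho_t)$ on $(0,T)$, which is supplied by Lemma \ref{lem:dyna}(iii) under $\Ent(\mu)<\infty$, and the formal computations above can be made rigorous by the same mollification procedure used in the proof of Lemma \ref{lem:dyna}. When $\Ent(\mu)=+\infty$, the right-hand side of (iii) is $+\infty$ by (ii) and the inequality is vacuous.
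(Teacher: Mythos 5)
Your proof is correct. Parts (i) and (ii) are handled exactly as in the paper, which likewise reads (i) off from Lemma \ref{lem:dyna}(iv) and (ii) from the combination of \pref{moment-bound} and \pref{entrminus-bound}. For (iii) you use the same construction as the paper --- reparametrize time by $T/T'$ and correct the drift to $\frac{T}{T'}\tilde v_t+\frac12\bigl(1-T/T'\bigr)\nabla\log\tilde\rho_t$ so as to restore the Fokker--Planck form --- but you estimate the resulting kinetic energy differently. The paper applies $|a+b|^2\le 2|a|^2+2|b|^2$ to the two pieces of the drift and then bounds $\frac14\int_0^T I(\rho_t)\,dt$ by $E+\Ent(\mu)-\Ent(\nu)$ via \pref{eqn:Ienergy}, which is where the constant $5$ comes from; you instead rewrite the corrected drift in the coordinates $\frac12\nabla\log\tilde\rho_s+\lambda w_{\lambda s}$ with $w=v-\frac12\nabla\log\rho$ and expand the square exactly, so that the cross term integrates to precisely $\Ent(\nu)-\Ent(\mu)$ (this is the same computation that underlies Lemma \ref{lem:dyna}(iv), so the mollification argument there does make it rigorous) and the identity of Lemma \ref{lem:dyna}(iv) can be injected with no loss. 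This buys the sharper estimate $2\FP_{T'}(\mu,\nu)\le 2E+(\Ent(\mu)-\Ent(\nu))$, which of course implies the stated bound with constant $5$. One minor point: in the degenerate case $\Ent(\mu)=+\infty$, to conclude that the right-hand side of (iii) equals $+\infty$ you need $\Ent(\nu)<+\infty$ as well as $\Ent(\nu)>-\infty$; the former follows from Lemma \ref{lem:dyna}(ii) evaluated at $t=T$, not from part (ii) of the present lemma, which only supplies the lower bound.
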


\begin{proof}

Inequality $(i)$ directly follows from Lemma \ref{lem:dyna}-$(iv)$; $(ii)$  is implied directly by \pref{moment-bound}-\pref{entrminus-bound} in the proof of lemma \ref{lem:dyna}. As for the last estimate let us consider an almost minimiser $(\rho_t,v_t)$ for the problem $ \FP_T(\mu, \nu) $; in particular we have $\partial \rho_t + \nabla ( v_t \rho_t) = \frac 1 2 \Delta \rho_t$ and $\iint_0^T v_t^2 \,d \rho_t \leq E+\delta$. Let us take $(\tilde{ \rho}_t, \tilde{v}_t)  = (\rho_{Tt/T'}, v_{Tt/T'})$; this curve is such that $\tilde{\rho}_{T'}=\nu$ and $\tilde{\rho}_0=\mu$ and moreover $ \partial_t \tilde{\rho}_t + \frac{T}{T'}\nabla ( \tilde{v}_t \tilde{\rho}_t ) = \frac {T}{2T'} \Delta \tilde{\rho}_t$ and in particular we have

$$ \partial_t \tilde{\rho}_t + \nabla \cdot \left( \Bigl( \frac{T}{T'}\tilde{v}_t  + \frac{1}2\bigl(1-T/T'\bigl)\frac {\nabla \tilde{\rho}_t}{\tilde{\rho}_t} \Bigr)\tilde{\rho}_t \right) = \frac {1}{2} \Delta \tilde{\rho}_t.$$

We can thus estimate

\begin{align*}
 2 \FP_{T'} (\mu, \nu)  &\leq \iint_0^{T'} \Bigl \vert \frac{T}{T'}\tilde{v}_t  + \frac{1}2\bigl(1-T/T'\bigl)\frac {\nabla \tilde{\rho}_t}{\tilde{\rho}_t} \Bigr \vert^2 \, d\tilde{\rho}_t \, dt\\
& \leq  2 \iint_0^{T'} \frac {T^2}{T'^2} \vert \tilde{v}_t\vert^2 \, d\tilde{\rho}_t + 2 \Big(1- \frac{ T}{T'}\Big)^2  \iint_0^{T'} \frac{ 1}{4}  \frac {|\nabla \tilde {\rho}_t|^2}{\tilde{\rho}_t} \,dx \, dt \\
& \leq  \frac{ 2T}{T'} (E +  \delta) + \frac{2  T'}{T} \Big(1- \frac{ T}{T'}\Big)^2   \int_0^T \frac{1}{4} I(\rho_t) \, dt\\
& \leq \frac{ 2T}{T'} (E +  \delta)+ \frac{2  T'}{T} \Big(1- \frac{ T}{T'}\Big)^2 (E+\delta+ \Ent(\mu) - \Ent(\nu))\\
& \leq 5 (E+\delta) +    (\Ent(\mu) - \Ent(\nu)).
\end{align*}

In the last two inequalities we used the estimates on $\int I(\rho_t)$ given by lemma \ref{lem:dyna}-$(iv)$ and then the fact that $T'/T \in [1/2, 2]$ which is easily seen to imply that $\frac{2  T'}{T} (1- \frac{ T}{T'})^2 \le 1$. Letting $\delta \to 0 $ we  thus obtain $(iii)$.

\end{proof}

We now fix the initial condition $\rho_0$ such that
\begin{equation}\label{condrhoz}
\rho_0\in \PP_2(\R^d), \; \Ent(\rho_0)<+\infty,
\end{equation}
and define 
\begin{equation}
\A_{\rho_0}:= \{\mu \in C([0,T], (\PP_2(\R^d), \W_2)) \; : \; \mu_0=\rho_0\}
\end{equation}
and its discretization
\begin{equation}
\A^N_{\rho_0}:= \{\mu^N:= (\mu_0^N, \cdots, \mu_N^N) \in \PP_2(\R^d)^{N+1}, \;   \mu_0=\rho_0\}.
\end{equation}
Given $\mu^N \in \A^N_{\rho_0}$, it will be convenient to extend it in a piecewise constant way as
\begin{equation}
\tilmu_t^N:=\mu_k^N,\; \; t\in \Big(\frac{(k-1)T}{N}, \frac{kT}{N}\Big], \; k=1, \cdots, N, \; \tilmu_0:=\mu_0^N=\rho_0.
\end{equation}
By construction, we  have
\[\CC^N(\mu^N)=\int_0^{T-T/N} F(\tilmu^N_s) ds + G(\tilmu_T^N).\]
We will say that a sequence $\mu^N\in \A^N_{\rho_0}$ converges to a curve of measures $\mu\in \A_{\rho_0}$ whenever
\begin{equation}\label{defconver}
 \lim_{N\to \infty} \sup_{t\in[0,T]} \W_2(\tilmu^N_t, \mu_t)=0.
 \end{equation}

For further use, let us point out that if $\E^N(\mu^N)$ is bounded, it follows from Lemmas \ref{lem:dyna}-\ref{lem:reparametrization} and a refined version of Ascoli-Arzel\`a's theorem (see \cite{AGS} chapter 3) that (up to a subsequence), $\mu^N$ may be assumed to converge (in the sense of \pref{defconver}) to some $\mu\in \A_{\rho_0}$.  Thanks to  Lemmas \ref{lem:dyna}, we have uniform (in $t$ and $N$) bounds on second moments and entropy of $\tilmu_t^N$ hence we may also assume that $\tilmu^N$ converges to $\mu$ weakly in $L^1((0,T)\times \R^d)$ and also that $\tilmu_T^N$ converges weakly in $L^1(\R^d)$ to $\mu_T$.


\begin{thm}\label{gammaconvfp}
The sequence of functionals $\E^N+\CC^N$ : $\A^N_{\rho_0}\to \R_+\cup \{+\infty\}$ $\Gamma$-converges to the functional $\E+\CC$: $\A_{\rho_0} \to  \R_+\cup \{+\infty\}$ as $N\to \infty$.  

\end{thm}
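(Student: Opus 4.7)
The proof of Theorem \ref{gammaconvfp} splits into the two standard $\Gamma$-convergence inequalities, with respect to the convergence \pref{defconver}.

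\textbf{Liminf inequality.} Given $\mu^N \to \mu$ with $\liminf_N (\E^N+\CC^N)(\mu^N) =: L < +\infty$, I pass to a realizing subsequence with a uniform upper bound. On each subinterval $[kT/N,(k+1)T/N]$, I choose a near-minimizing pair $(\rho^{N,k},v^{N,k})$ for $\FP_{T/N}(\mu_k^N,\mu_{k+1}^N)$, and glue these into a global Fokker-Planck solution $(\rho^N,v^N)$ on $[0,T]$ with $\rho^N_{kT/N}=\mu_k^N$. By Lemmas \ref{lem:dyna}-\ref{lem:reparametrization}, the uniform kinetic-energy bound propagates into uniform bounds on the second moments and entropies of $\rho_t^N$, which in turn (by Lemma \ref{lem:dyna}-(v)) controls the $\W_2$-modulus of continuity of $t\mapsto \rho_t^N$; the refined Ascoli-Arzel\`a argument from \cite{AGS} then yields weak compactness of $(\rho^N, m^N:=v^N \rho^N)$. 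The limit $(\rho,m)$ solves the Fokker-Planck equation distributionally; the fact that $\tilmu^N \to \mu$ uniformly in $\W_2$ and that the oscillation of $\rho^N$ within a subinterval vanishes with $N$ forces $\rho_t = \mu_t$ for all $t$. Standard joint lower semicontinuity of $(\rho,m)\mapsto \iint |m|^2/\rho$ under weak convergence then yields $\E(\mu) \le \liminf_N \E^N(\mu^N)$. For the cost, writing $F=F_1+F_2$ and $G=G_1+G_2$, I combine $\W_2$-continuity of $F_2,G_2$ (which turns the Riemann sums defining $\CC^N$ into integrals) with lower semicontinuity of the local convex parts $F_1,G_1$ under weak $L^1$ convergence (granted by the uniform moment/entropy bounds on $\tilmu_t^N$).

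\textbf{Limsup inequality.} Assuming $(\E+\CC)(\mu)<+\infty$, I build the recovery sequence by taking $\mu_k^N$ to be a suitable time-average of $\mu$ over an interval of length $T/N$ centered at $kT/N$, with the convention $\mu_0^N=\rho_0$ and $\mu_N^N=\mu_T$. Then $\mu^N\to \mu$ in the sense of \pref{defconver} thanks to the uniform $\W_2$-continuity of $\mu$ and the convexity of $\W_2^2$. For the kinetic part, I use that the Fokker-Planck equation is \emph{linear} in the pair $(\rho,m=\rho v)$ and that the action $(\rho,m)\mapsto |m|^2/\rho$ is jointly convex, so time-averaging any Fokker-Planck solution associated to $\mu$ produces another Fokker-Planck solution, interpolating precisely the $\mu_k^N$, with no larger total kinetic energy than $\E(\mu)$; this gives $\limsup_N \E^N(\mu^N)\le \E(\mu)$. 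For the cost, Jensen's inequality applied to the convex integrand $L_F$ yields $(T/N)\sum_k F_1(\mu_k^N)\le \int_0^T F_1(\mu_s)\,ds$ (and analogously for $G_1$), while the contributions of $F_2$ and $G_2$ pass to the limit by $\W_2$-continuity.

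\textbf{Main obstacle.} I expect the heart of the proof to be the liminf step: gluing the near-optimal discrete pieces into a global Fokker-Planck solution with uniform regularity, identifying the time slices of the limit with $\mu_t$ at every $t\in[0,T]$ (not only on the discretization grid), and passing to the limit in the kinetic-energy action. This rests crucially on the quantitative estimates of Lemmas \ref{lem:dyna}-\ref{lem:reparametrization} (which give uniform control of second moment, entropy and Fisher information, hence of $\W_2$-regularity in time) and implicitly on the Schr\"{o}dinger-bridge equivalence of Theorem \ref{noisybb} used on each subinterval to bound boundary entropies. A secondary but non-trivial point is the interplay between $\W_2$-convergence and weak $L^1$ convergence of $\tilmu^N$, both needed respectively for $F_2$ and $F_1$; the uniform moment and entropy bounds are what make the two compatible.
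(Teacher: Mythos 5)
Your overall strategy coincides with the paper's: for the liminf, almost-minimizing momenta, compactness from the energy bound, lower semicontinuity of the Benamou--Brenier functional, and the split of $F,G$ into a weakly $L^1$-l.s.c.\ convex local part and a $\W_2$-continuous part; for the limsup, a time-averaged recovery sequence combined with convexity of $\FP_{T/N}$ and Jensen's inequality for $L_F$.

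There is, however, a genuine gap in your limsup argument at the endpoints. If $\mu_k^N$ is the average of $\mu$ over an interval of length $T/N$ centred at $kT/N$ for interior $k$, while $\mu_0^N=\rho_0$ and $\mu_N^N=\mu_T$ are imposed by convention, then the time-averaged Fokker--Planck solution does \emph{not} interpolate $\mu_0^N$ at $t=0$ (nor $\mu_N^N$ at $t=T$): averaging moves the initial datum away from $\rho_0$. Consequently the terms $\FP_{T/N}(\rho_0,\mu_1^N)$ and $\FP_{T/N}(\mu_{N-1}^N,\mu_T)$ are not covered by your convexity argument, and they cannot be handled by convexity alone: after writing $\FP_{T/N}(\rho_0,\mu_1^N)\le \int_{1/2}^{3/2}\FP_{T/N}(\rho_0,\mu_{sT/N})\,ds$, one must compare $\FP_{T/N}(\rho_0,\mu_{sT/N})$ with $\FP_{sT/N}(\rho_0,\mu_{sT/N})$, i.e.\ traverse a piece of the curve in a time different from its natural one. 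Reparametrizing time produces an extra drift $\tfrac12\bigl(1-T/T'\bigr)\nabla\log\tilde\rho$ whose kinetic cost is the Fisher information, and this is exactly what Lemma \ref{lem:reparametrization}-(iii) controls (via $2\FP_{T'}\le 5E+\Ent(\mu)-\Ent(\nu)$), in combination with the continuity of $t\mapsto\Ent(\mu_t)$ from Lemma \ref{lem:dyna}. The paper organizes this through the reparametrization $\phi_{\delta,N}$, which pins the averaged curve to $\rho_0$ and $\mu_T$ exactly at the endpoints and then shows that the two boundary contributions vanish as $N\to\infty$. Without this ingredient your recovery-sequence estimate is incomplete. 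A minor further point: Theorem \ref{noisybb} is not actually used in the $\Gamma$-convergence proof; the entropy and moment bounds you need come directly from the PDE estimates of Lemmas \ref{lem:dyna}--\ref{lem:reparametrization}.
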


\begin{proof}

Let us start with the $\Gamma$-liminf inequality. Let $\mu^N\in \A^N_{\rho_0}$ converge  (in the sense of \pref{defconver}) to $\mu\in \A_{\rho_0}$ and assume that $\E^N(\mu^N)+\CC^N(\mu^N)$ is bounded (hence so  is $\E^N(\mu^N)$). As observed above, one may assume that $\tilmu^N$ converges to $\mu$ weakly in $L^1(0,T)\times \R^d)$ and also that $\tilmu_T^N$ converges weakly in $L^1(\times \R^d)$ to $\mu_T$. In the terminal  term $G(\tilmu_T^N)=\int_{\R^d} L_G(\tilmu_T^N(x)) dx+ G_2(\tilmu_T^N)$, we have the sum of a convex nonnegative integral term, which is then weakly l.s.c. in $L^1(\R^d)$, and a term, $G_2(\tilmu_T^N)$, which is continuous for $\W_2$, so that 
\begin{equation}\label{lscboundary}
\liminf_N G(\tilmu_T^N)\ge G(\mu_T).
\end{equation}
Fix $\eps>0$ and let $N>T \eps^{-1}$, by Fatou's Lemma, the non-negativity of $F=F_1+F_2$ and invoking the same weak $L^1$ lower-semi continuity argument as above (but with respect to both variables $t$ and $x$), we get
\[\liminf_N \int_0^{T-T/N} \Big(  \int_{\R^d} L_F(\tilmu_s^N) dx+ F_2(\tilmu_s^N) \Big)ds \ge \int_0^{T-\eps} F(\mu_s) ds\]
letting $\eps\to 0$ and recalling \pref{lscboundary}, we  deduce
\begin{equation}\label{lsccn}
\liminf_{N} \CC^N(\mu^N) \ge \CC(\mu). 
\end{equation}
For the energy term, we consider $m_t^N :=\tilmu_t^N v_t^N$, an almost minimizer for $\E^N(\mu^N)$ i.e. $\partial_t \tilmu_t^N-\frac{1}{2} \Delta  \tilmu_t^N+ \dive(m_t^N)=0$ and
\begin{equation}
\E^N(\mu^N) \ge \frac{1}{2} \int_0^T \int_{\R^d} \frac{\vert m_t^N\vert^2}{\tilmu_t^N} -\frac{1}{N}.
\end{equation}
By Young's inequality and the bound on $\E^N(\mu^N)$, one deduces a bound on the total variation (in $t$ and $x$) of $m^N$ from which (up to a further extraction) one may assume that for every $R>0$, $m^N$ converges weakly $*$ to some vector-valued measure $m$ on $[0,T]\times B_R$, where $m$ satisfies $\partial_t \mu_t-\frac{1}{2} \Delta  \mu_t+ \dive(m_t)=0$. By a well known lower semi-continuity property of convex functionals of measures (see Theorem 2.34 in \cite{AFP}) which applies to the Benamou-Brenier functional, we have
\[\liminf_{N} \E^N(\mu^N) \ge \frac{1}{2} \int_0^T \int_{B_R} \frac{\vert m \vert^2}{\mu}.\]
Letting $R\to \infty$ we deduce that $m=\mu v$ and $\int_0^T \int_{\R^d} \vert v_t\vert^2 d \mu_t dt <+\infty$ so that 
\[\liminf_{N} \E^N(\mu^N) \ge  \frac{1}{2} \int_0^T \int_{\R^d} \vert v_t\vert^2 d \mu_t dt \ge \E(\mu).\]
Combining it with \pref{lsccn} we obtain the desired $\Gamma$-liminf inequality.

\smallskip

For the $\Gamma$-limsup inequality we will do a smoothing construction in order to deal with the nonlinear term $F$. For $\delta \in [1/2, 3/2]$, let us consider the time reparametrization $\phi_{\delta,N}$  defined by $\phi_{\delta,N}(0)=0$ and
$$ \phi_{\delta,N}' (t)= \begin{cases} \delta \qquad & \text{if } 0\le  t \leq T/N \\ 1 \qquad & \text{if }  T/N \leq  t \leq T- T/N \\ 2-\delta \qquad & \text{if } T-T/N  \le t  \le T \end{cases}.$$
Notice in particular, that, if $\frac 12 \leq \delta \leq \frac 32$, then 
\begin{equation}\label{eqn:propphi}
 \frac 12 (t-s) \leq \phi_{\delta,N}(t)- \phi_{\delta,N}(s) \leq 2 (t-s), \;    \vert \phi_{\delta,N}(t)-t\vert \le \frac{T}{2N}, \;  \forall 0\leq s \leq  t \leq T. 
\end{equation}
Let $\mu\in \A_{\rho_0}$ be such that $\E(\mu)+\CC(\mu)<+\infty$ (otherwise there is nothing to prove). We then define 
\[\tilde{\rho}^N_t =\int_{1/2}^{3/2} \mu_{\phi_{\delta,N}(t)} \, d \delta.\]
In particular it is worthwhile to notice that
\begin{equation}\label{eqn:linear}\tilde{ \rho}^N_t =  \frac N {T}\int_{t-\frac T{2N}}^{t+\frac T{2N}} \mu_s \, ds \qquad  \text{if } T/N \leq t \leq T-T/N.\end{equation}

Our recovery sequence will be then $\mu_k^N = \tilde{ \rho}^N_{kT/N}$, $k=0, \cdots, N$. Notice  that $\mu^N$ converges to $\mu$ thanks to \pref{eqn:propphi} and the fact that the curve $t\in [0,T]\mapsto \mu_t$ is $C^{0,1/2}$ with respect to $\W_2$ as a consequence of Lemma \ref{lem:dyna}. By construction, $\mu_N^N=\mu_T$, so the convergence of the terminal term $G$ is trivial. The convergence of the discretization of $F_2$ then follows directly from its continuity in the Wasserstein metric and Lebesgue's dominated convergence theorem. For the  integral term $F_1$, we use equation \eqref{eqn:linear}, the convexity and the non-negativity of $F_1$ to conclude that 
\[\begin{split}
\frac T N \sum_{k=1}^{N-1} F_1(\mu_k^N) \leq \frac T N \sum_{k=1}^{N -1}  \frac N {T}\int_{\frac{kT}N-\frac T{2N}}^{\frac{kT}N+\frac T{2N}} F_1(\mu_s) \, ds  \\
= \int_{  \frac{T}{2N}}^{T- \frac{T}{2N}} F_1(\mu_s)\ ds
\leq  \int_0^T F_1(\mu_s)\ ds 
\end{split}\]
hence 
\begin{equation}\label{glsc}
\limsup_{N \to \infty} \CC^N(\mu^N)\le \CC(\mu).
\end{equation}
For the energy part, we first use the convexity of $\FP_{\frac{T}{N}}$ and the representation of $\tilde{\rho}^N_t$ to get that for $1\le k \le N-2$

$$\FP_{\frac{T}{N}}  (\mu_k^N , \mu_{k+1}^N) \leq \frac N{T} \int_{kT/N-T/2N}^{kT/N+T/2N}  \FP_{\frac{T}{N}}(\mu_{s} , \mu_{s+ T/N}) \, ds.$$

Now we take $v$ an optimal vector field for $\E(\mu)$ (existence can be obtained by classical arguments, indeed, taking $m=\mu v$ as new unknown, this is a convex minimization problem with a linear constraint). By construction, we then have 
$ \FP_{\frac{T}{N}}(\mu_{s} , \mu_{s+ T/N}) \leq \frac{1}{2} \int_s^{s+T/N} \int_{\R^d} \vert v_t\vert^2 d\mu_t dt$. So, summing up over $k$ we get
\[\begin{split}
\sum_{k=1}^{N-2} \FP_{\frac{T}{N}}  (\mu_k^N , \mu_{k+1}^N) \leq \frac N{T} \sum_{k=1}^{N-2}  \int_{kT/N-T/2N}^{kT/N+T/2N}  \int_s^{s+T/N}  \frac{1}{2}  \int_{\R^d} \vert v_t\vert^2 d\mu_t dt ds\\
\le \int_{T/2N}^{T-T/2N}   \frac{1}{2}  \int_{\R^d} \vert v_t\vert^2 d\mu_t dt \le \E(\mu)
\end{split}\]


For the last pieces, we use  again the convexity of $\FP_{\frac{T}{N}}$, the definition of $\mu_1^N=\int_{1/2}^{3/2} \mu_{st/N} ds$ and  Lemma \ref{lem:reparametrization} $(iii)$ to get

\begin{align*}
\FP_{\frac{T}{N}}  (\mu_0^N , \mu_{1}^N) &= \FP_{\frac{T}{N}}  (\rho_0 , \mu_{1}^N) \\
&\leq \int_{1/2}^{3/2}    \FP_{\frac{T}{N}} (\rho_0, \mu_{sT/N} ) \, ds \\ 
&\leq 5 \int_{1/2}^{3/2}  \FP_{\frac{sT}{N}} (\rho_0, \mu_{sT/N} ) \, ds  +  \frac{1}{2} \int_{1/2}^{3/2} \Ent (\rho_0) - \Ent(\mu_{sT/n}) \, ds\\
& \leq 5 \int_0^{\frac{3T}{2N}} \int_{\R^d} |v_t|^2 \, d \rho_t dt + \frac{1}{2} \int_{1/2}^{3/2} \Ent (\rho_0) - \Ent(\mu_{sT/N}) \, ds
\end{align*}

But now the first term is converging to zero as $N \to \infty$ and the second term as well thanks to the uniform continuity of $t \mapsto \Ent (\mu_t)$ (proven in Lemma \ref{lem:dyna} $(iii)$). In a similar way, we can also prove  that $ \FP_{\frac{T}{N}} (\mu_{N-1}^N , \mu_{N}^N) \to 0$ thus completing the proof that
\[\limsup_{N\to \infty} \E^N(\mu^N)\le \E(\mu)\]
which, together with \pref{glsc}, gives the desired $\Gamma$-limsup inequality for the recovery sequence $\mu^N$. 

\end{proof}

\begin{rem}\label{moregeneralFG}
For the sake of simplicity, we have taken a continuous local part $L_F$ but is clear from the proof of Theorem \ref{gammaconvfp} that our results extend to the case where there is an additional  pointwise closed and convex state constraint on the density, for instance a hard congestion constraint such as $\rho \le M$ where $M$ is a prescribed saturation threshold (a case we will actually consider in our numerical simulations in section \ref{sec-algo}). It also goes without saying that allowing $F$ and $G$ to depend on $x$ does not create extra difficulties neither. 
\end{rem}

In particular, when $F=G=0$, we  have

\begin{coro}\label{corogamma}
The sequence of functionals $\E^N$: $\A^N_{\rho_0}\to \R_+\cup \{+\infty\}$ $\Gamma$-converges to the functional $\E$: $\A_{\rho_0} \to  \R_+\cup \{+\infty\}$ as $N\to \infty$. 

\end{coro}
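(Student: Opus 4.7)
The plan is to observe that Corollary \ref{corogamma} is nothing but the specialization of Theorem \ref{gammaconvfp} to the trivial case $F \equiv 0$, $G \equiv 0$. The first step is to verify that this choice satisfies the structural hypotheses imposed in Section \ref{sec-prel}: writing $F = F_1 + F_2$ and $G = G_1 + G_2$ with $F_1 = F_2 = G_1 = G_2 = 0$, the local integrands $L_F$ and $L_G$ can be taken identically zero, which are trivially convex, continuous, nonnegative and vanish at zero, while $F_2$ and $G_2$, being constantly equal to zero, are obviously continuous for $\W_2$.

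Once the hypotheses are seen to hold, the second step is to note that $\CC^N \equiv 0$ and $\CC \equiv 0$ identically on $\A^N_{\rho_0}$ and $\A_{\rho_0}$, so that the functionals $\E^N + \CC^N$ and $\E + \CC$ appearing in Theorem \ref{gammaconvfp} coincide with $\E^N$ and $\E$ respectively. Applying the theorem then yields the desired $\Gamma$-convergence statement immediately, without any further work.

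I do not expect any real obstacle here, since the genuine difficulties were already resolved in the proof of Theorem \ref{gammaconvfp}: the $\Gamma$-liminf inequality rests on the standard lower semi-continuity of the Benamou--Brenier functional with respect to weak convergence of the momentum measure, combined with the compactness furnished by Lemmas \ref{lem:dyna} and \ref{lem:reparametrization}, while the recovery sequence is produced by the averaging construction $\tilde\rho^N_t = \int_{1/2}^{3/2} \mu_{\phi_{\delta,N}(t)}\, d\delta$ together with the endpoint estimate of Lemma \ref{lem:reparametrization} $(iii)$. In the special case $F = G = 0$ both arguments in fact simplify, because one no longer needs the weak $L^1$ lower semi-continuity or dominated convergence arguments that were invoked to control the $\CC^N$ term.
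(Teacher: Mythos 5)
Your proposal is correct and matches the paper exactly: the paper states Corollary \ref{corogamma} with no separate proof, introducing it simply with ``In particular, when $F=G=0$'', i.e.\ as the immediate specialization of Theorem \ref{gammaconvfp} obtained by taking $L_F=L_G=0$ and $F_2=G_2=0$, so that $\CC^N$ and $\CC$ vanish identically. Your verification that this trivial choice satisfies the structural hypotheses of Section \ref{sec-prel} is the only (implicit) content of the deduction, and you have supplied it correctly.
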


\subsection{Equivalence}

Our aim now is to show that thanks to Theorem \ref{gammaconvfp}, we have a continuous in time counterpart of the formulas of Corollary \ref{snegalen}. This will enable us to deduce that \pref{minimfgsynt} can be equivalently  rewritten as an entropy minimization problem. We first need a technical result:

\begin{lem}\label{compactQ}
Let $\rho_0$ satisfy \pref{condrhoz}, if $(Q^N)_N$ is a family in $\PP(\Omega)$ such that $Q_0^N=\rho_0$ and $H(Q^N\vert R)$ is bounded then $Q^N$ is tight. Moreover, if there exists $\mu\in A_{\rho_0}$ such that $Q^N_{kT/N}=\mu_{kT/N}$ for every $k$ and $Q^N$ converges narrowly to $Q$ then $Q_t=\mu_t$ for every $t\in [0,T]$. 
\end{lem}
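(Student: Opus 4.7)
The plan is to split the lemma into two parts: first establish tightness of the family $(Q^N)_N$, then use tightness together with path-continuity to identify the marginals of the limit with $\mu$.

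For tightness, the main subtlety is that $R$ is only $\sigma$-finite, so I would first pass to the equivalent \emph{probability} measure $\tilR := e^{-\pi|\omega(0)|^2} R$. This is genuinely a probability because $R_0 = \Leb^d$ gives $\int_{\Omega} e^{-\pi|\omega(0)|^2} \, dR = \int_{\R^d} e^{-\pi|x|^2} \, dx = 1$. Using the definition of $H(\cdot|R)$ recalled in the preliminaries, one has
$$H(Q^N|R) = H(Q^N|\tilR) - \pi \int_{\R^d} |x|^2 \, d\rho_0(x),$$
so the hypothesis combined with $\rho_0 \in \PP_2(\R^d)$ gives a uniform bound on $H(Q^N|\tilR)$. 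I would then invoke the standard fact that sublevel sets of relative entropy against a tight probability measure are tight: for any compact $K \subset \Omega$ with $\tilR(\Omega \setminus K) \leq \delta$, the Gibbs variational formula applied to $\phi = \lambda \mathbf{1}_{\Omega \setminus K}$ with $\lambda = \log(1/\delta)$ yields
$$Q^N(\Omega \setminus K) \leq \frac{H(Q^N|\tilR) + \log 2}{\log(1/\delta)},$$
and the right-hand side tends to $0$ as $\delta \to 0$ (which is admissible since $\tilR$ is a probability measure on a Polish space, hence tight by Ulam's theorem), uniformly in $N$.

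For the second assertion, fix $t \in [0,T]$ and set $k_N := \lfloor tN/T \rfloor$, $s_N := k_N T/N$, so $|t - s_N| \leq T/N \to 0$. The marginal constraint gives $Q^N_{s_N} = \mu_{s_N}$, and since $\mu$ is continuous for $\W_2$ one has $\mu_{s_N} \to \mu_t$ narrowly. On the other hand, continuity of $e_t$ and narrow convergence $Q^N \to Q$ give $Q^N_t \to Q_t$ narrowly. So it suffices to show that $Q^N_t - Q^N_{s_N}$ tends to zero when tested against bounded Lipschitz functions. For such $\phi: \R^d \to \R$, one writes
$$\int \phi \, dQ^N_t - \int \phi \, dQ^N_{s_N} = \int \bigl[ \phi(\omega(t)) - \phi(\omega(s_N)) \bigr] \, dQ^N(\omega),$$
and splits the integral using a compact $K \subset \Omega$ provided by the first part, such that $Q^N(\Omega \setminus K) \leq \eps$ uniformly in $N$. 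By Arzel\`a-Ascoli, $K$ is equicontinuous with some modulus $\eta$, so the contribution from $K$ is bounded by $\|\phi\|_{\Lip} \, \eta(T/N)$, while the contribution from $\Omega \setminus K$ is at most $2\|\phi\|_\infty \, \eps$. Letting $N \to \infty$ and then $\eps \to 0$ yields the identification $Q_t = \mu_t$.

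The only genuinely delicate point I anticipate is the treatment of the $\sigma$-finiteness of $R$ in the entropy bound, and in particular ensuring that the shift to the probability reference $\tilR$ is consistent with the definition of $H(\cdot|R)$ adopted in the paper. Once that reduction is made, both the tightness estimate via Gibbs' variational formula and the passage from the grid-time marginal constraints to an arbitrary continuous time $t$ via Arzel\`a-Ascoli are classical.
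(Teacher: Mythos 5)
Your proof is correct and follows essentially the same route as the paper's: the reduction to the probability reference $\tilR=e^{-\pi|\omega(0)|^2}R$, the entropy identity $H(Q^N|R)=H(Q^N|\tilR)-\pi\int|x|^2\,d\rho_0$, and the identification $Q_t=\mu_t$ via the grid times $s_N$, the $\W_2$-continuity of $\mu$, and Arzel\`a--Ascoli equicontinuity on a compact set of paths are all exactly as in the paper. The only (immaterial) difference is that the paper extracts tightness from the uniform integrability of the densities $dQ^N/d\tilR$ (de la Vall\'ee Poussin) rather than from the Gibbs variational inequality; both are standard and equivalent here.
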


\begin{proof}
Let us first define the (equivalent to $R$) probability measure $\tilR$ on $\Omega$ by $\tilR(\mbox{d}\omega):=e^{-\pi \vert \omega(0)\vert^2} R(\mbox{d}\omega)$. We then observe that 
\[H(Q^N\vert R)=H(Q^N \vert \tilR)-\pi \int_{\R^d} \vert x\vert^2 \mbox{d} \rho_0.\]
Since $Q^N$ is absolutely continuous with respect to $\tilR$, it can be identified to its Radon-Nikodym derivative $\tilQ^N$ with respect to $\tilR$,
we thus deduce from the boundedness of $H(Q^N\vert R)$  that $\tilQ^N$ is uniformly integrable hence admits a subsequence which converges weakly in $L^1(\tilR)$, the tightness claim directly follows. Let us now prove the second statement,


Since $\Omega$ is Polish and $Q^N$ is tight, by Prokhorov Theorem, for every $\eps>0$ there exists a compact subset $K_\eps$ of $\Omega$ such that $Q^N(\Omega\setminus K_\eps)\leq \eps$ for every $N$. Now let $t\in [0,T]$ and $t_N=k_NT/N$ be such that $\vert t- t_N\vert \le  T/N$. For a given $\varphi \in C_b^0(\R^d)$ we then have
\[\int_{\R^d} \varphi \mbox{ d} (Q_t^N-\mu_t) =\int_{\R^d} \varphi \mbox{ d} ((Q_t^N-Q^N_{t_N})-(\mu_t-\mu_{t_N}))\]
the second term tends to $0$ by continuity of the curve $t\mapsto \mu_t$ and we can decompose the first term  as
\[\int_{K_\eps}( \varphi(\omega(t))-\varphi(\omega(t_N)) ) \mbox{d} Q^N(\omega)+ \int_{\Omega \setminus K_\eps} ( \varphi(\omega(t))-\varphi(\omega(t_N))) \mbox{d} Q^N(\omega)\]
By Ascoli-Arz\'ela's Theorem, $K_\eps$ is uniformly equicontinuous so that the first term tends to $0$ as $N\to +\infty$ whereas the second term is less in absolute value than $2\eps \Vert \varphi\Vert_{\infty}$. Since $Q^N_t$ converges narrowly to $Q_t$ this shows that $Q_t=\mu_t$. 
\end{proof}

As a consequence, we recover,  the representation result of Dawson and G\"{a}rtner \cite{DawsonGartner} (also see  \cite{Follmer}):

\begin{coro}
Let $\rho_0$ satisfy \pref{condrhoz}, then we have
\begin{equation}\label{segale}
\SS(\mu)=\E(\mu)+\Ent(\rho_0), \; \forall \mu \in \A_{\rho_0}.
\end{equation}

\end{coro}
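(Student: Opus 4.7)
The plan is to combine the discrete identity $\SS^N=\E^N+\Ent(\rho_0)$ from Corollary \ref{snegalen} with the $\Gamma$-convergence $\E^N\to\E$ from Corollary \ref{corogamma} and the tightness/continuity result in Lemma \ref{compactQ}. One inequality will follow from using an arbitrary admissible $Q$ as competitor in every discrete problem; the other will come from concatenating near-optimal discrete Markov plans into path-space measures and passing to a narrow limit.

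For the lower bound $\SS(\mu)\ge \E(\mu)+\Ent(\rho_0)$, I would take any $Q\in\PP(\Omega)$ with $Q_t=\mu_t$ for all $t$ and $H(Q\vert R)<+\infty$ (otherwise there is nothing to prove). Setting $\mu^N:=(\mu_0,\mu_{T/N},\ldots,\mu_T)$, the measure $Q$ is admissible for the multi-marginal problem \pref{discrschro}, so Corollary \ref{snegalen} gives
\[
H(Q\vert R)\;\ge\;\SS^N(\mu^N)\;=\;\E^N(\mu^N)+\Ent(\rho_0).
\]
Since $\mu\in\A_{\rho_0}$ is continuous in $\W_2$, the piecewise constant extension $\tilmu^N$ of $\mu^N$ converges to $\mu$ in the sense of \pref{defconver}, so the $\Gamma$-liminf part of Corollary \ref{corogamma} yields $\liminf_N \E^N(\mu^N)\ge \E(\mu)$, and taking the infimum over admissible $Q$ gives the desired inequality.

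For the upper bound $\SS(\mu)\le \E(\mu)+\Ent(\rho_0)$, I would assume $\E(\mu)<+\infty$ and pick a $\Gamma$-limsup recovery sequence $\mu^N\in\A^N_{\rho_0}$ so that $\tilmu^N\to\mu$ and $\limsup_N\E^N(\mu^N)\le \E(\mu)$. For each $N$ pick a (near-)minimizer $Q^N$ for $\SS^N(\mu^N)$ in the path-space formulation \pref{discrschro} (for instance obtained from the static minimizer of \pref{discrshrosta} by concatenating it with Brownian bridges, as in the discussion preceding Theorem \ref{noisybb}); by Corollary \ref{snegalen},
\[
H(Q^N\vert R)=\SS^N(\mu^N)=\E^N(\mu^N)+\Ent(\rho_0).
\]
Since this is uniformly bounded and $Q^N_0=\rho_0$ satisfies \pref{condrhoz}, Lemma \ref{compactQ} supplies tightness, so up to a subsequence $Q^N\weakto Q$ narrowly on $\Omega$; lower semicontinuity of $H(\cdot\vert R)$ then gives $H(Q\vert R)\le \E(\mu)+\Ent(\rho_0)$.

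The main obstacle is the marginal identification $Q_t=\mu_t$ for every $t\in[0,T]$, which is needed to conclude that $Q$ is admissible for $\SS(\mu)$. Lemma \ref{compactQ} is stated for a fixed $\mu$ with $Q^N_{kT/N}=\mu_{kT/N}$, while here we only have $Q^N_{kT/N}=\mu^N_k$ with $\mu^N_k\to\mu_{kT/N}$ in $\W_2$. I would re-run the three-term splitting of Lemma \ref{compactQ}: for $\varphi\in C_b(\R^d)$, $t\in[0,T]$ and $t_N=k_NT/N$ with $\vert t-t_N\vert\le T/N$, write
\[
\int\varphi\,\mathrm{d}(Q^N_t-\mu_t)=\int\varphi\,\mathrm{d}(Q^N_t-Q^N_{t_N})+\int\varphi\,\mathrm{d}(\mu^N_{k_N}-\mu_{t_N})+\int\varphi\,\mathrm{d}(\mu_{t_N}-\mu_t).
\]
The first term vanishes by the tightness/equicontinuity argument of Lemma \ref{compactQ}, the second by the uniform $\W_2$-convergence $\tilmu^N\to\mu$, and the third by continuity of $t\mapsto\mu_t$ in $\W_2$; combined with $Q^N_t\weakto Q_t$, this forces $Q_t=\mu_t$. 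Once this is done, $Q$ is admissible in \pref{schroinfmarg} and the two inequalities combine into \pref{segale}.
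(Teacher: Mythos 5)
Your argument is correct, and the overall strategy (discrete identity from Corollary \ref{snegalen}, $\Gamma$-convergence, tightness from Lemma \ref{compactQ}, lower semicontinuity of $H(\cdot\vert R)$) is the same as the paper's; the lower bound is essentially verbatim the paper's proof. The one genuine difference is in the upper bound: the paper does \emph{not} use a $\Gamma$-limsup recovery sequence. It works directly with the sampled marginals $(\mu_0,\mu_{T/N},\ldots,\mu_T)$ and the elementary restriction inequality $\E(\mu)\ge\E^N(\mu_0,\mu_{T/N},\ldots,\mu_T)$ (obtained by restricting a near-optimal $(\rho,v)$ for $\E(\mu)$ to each subinterval $[kT/N,(k+1)T/N]$), which is all that is needed since one only wants an upper bound on $\SS^N$, not the full limsup statement. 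This has the advantage that the near-minimizers $Q^N$ then satisfy $Q^N_{kT/N}=\mu_{kT/N}$ exactly, so Lemma \ref{compactQ} applies as stated and yields $Q_t=\mu_t$ with no further work. Your route, via the recovery sequence of Theorem \ref{gammaconvfp}, forces you to re-prove the marginal identification with perturbed marginals $\mu^N_k\to\mu_{kT/N}$; your three-term splitting does repair this correctly (note that $\mu^N_0=\rho_0$ for the paper's recovery sequence, so Corollary \ref{snegalen} still gives the constant $\Ent(\rho_0)$), but it is an unnecessary complication. In short: correct proof, same key lemmas, with a detour in the upper bound that the paper's more direct choice of discrete marginals avoids.
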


\begin{proof}
Let $\mu \in \A_{\rho_0}$ be such that $\SS(\mu)<+\infty$, by definition of $\SS^N$ and $\SS$ and using Corollary \ref{snegalen}, we have
\[\SS(\mu)\ge \SS^N(\mu_0, \mu_{T/N}, \cdots, \mu_T) =\E^N(\mu_0, \mu_{T/N}, \cdots, \mu_T)+\Ent(\rho_0).\]
Taking the liminf and using Theorem \ref{gammaconvfp}, we get $\SS(\mu)\ge \E(\mu)+ \Ent(\rho_0)$.  Now take  $\mu \in \A_{\rho_0}$ such that $\E(\mu)<+\infty$, we then have
\[\E(\mu)\ge \E^N(\mu_0, \mu_{T/N}, \cdots, \mu_T) =\SS^N(\mu_0, \mu_{T/N}, \cdots, \mu_T)-\Ent(\rho_0)\]
let then $Q^N \in \PP(\Omega)$ be such that $Q_{kT/N}=\mu_{kT/N}$ for  $k=0, \cdots, N$ and 
\[ \SS^N(\mu_0, \mu_{T/N}, \cdots, \mu_T) \ge H(Q^N \vert R)-\frac{1}{N}\]
passing to a  subsequence if necessary, thanks to Lemma \ref{compactQ}, we may assume that $Q^N$ converges narrowly to some $Q\in \PP(\Omega)$ such that $Q_t=\mu_t$ for every $t\in [0,T]$ so that, thanks to the weak lower semi-continuity of the entropy
\[\E(\mu)+\Ent(\rho_0)\ge \liminf_N H(Q^N \vert R) \ge H(Q\vert R) \ge \SS(\mu).\]

\end{proof}

\section{Algorithm and Numerical Results}\label{sec-algo}

\subsection{Multi-Marginal Sinkhorn} 

We now introduce  a numerical scheme to solve the discretized in time problem  \eqref{entminimfgsyntn}. The scheme is based on  a variant of the celebrated Sinkhorn algorithm \cite{Sinkhorn67}, already used to solve many variational problem related to optimal transport (see for instance \cite{Cut,galichon2015cupid,benamou2017generalized,benamou2016numerical,Ben,blanchet2016computation,chizat2016scaling,thesislulu,peyre2015entropic}). 
Note that the iterative method proposed in \cite{gueant} for MFGs with quadratic hamiltonians can be reinterpreted in this framework. \\

 We recall that \eqref{entminimfgsyntn} reads as:
\[ \inf\{ \SS^N(\mu_0, \cdots, \mu_N)+  \frac{T}{N}\sum_{k=1}^{N-1}F(\mu_k)+G(\mu_N),  \; \mu_i \in \PP_2(\R^d), \;  \mu_0=\rho_0\}\]
where $\SS^N$ is itself defined by   \eqref{discrshrosta} which is an entropy minimization with multi-marginal constraints.
Denoting    $\pi^k:(\R^d)^{N+1}\rightarrow(\R^d)$  the $k-$th canonical projection we can obviously rewrite \eqref{entminimfgsyntn}  as  an optimization problem over plans $\gamma^N$ only:
\begin{equation}
\label{primalEnt}
 \inf\left\{ H(\gamma^N\vert R^N) +i_{\rho_0} (\pi_\#^0\gamma^N)+\frac{T}{N}\sum_{k=1}^{N-1}F(\pi^k_{\#}\gamma^N)+G(\pi^N_{\#}\gamma^N)\;:\;\gamma^N\in\PP((\R^d)^{N+1})\right\} ,
 \end{equation} 
 where 
 \[i_{\rho_0}(\rho)= \begin{cases} 0 \mbox{ if $\rho=\rho_0$}\\ +\infty \mbox{ otherwise} \end{cases}\]
  is the indicator function in the convex analysis sense and is used to enforce the initial  condition.
 
\smallskip
We recall that $R^N$ is defined as  
\[ R^N:=R_{0,\frac{T}{N},\cdots,T}  \]
Since $R$ is the reversible Wiener measure, $R^N$ can be decomposed by using the heat kernel \eqref{heatker} as
\begin{equation}
\label{decomposed}
 R^N( \mbox{d} x_0,\cdots, \mbox{d} x_N):=\left(\prod_{k=1}^N P_{\frac{T}{N}}(x_k-x_{k-1})\right)\mbox{d}x_0\cdots\mbox{d}x_N .
 \end{equation}

We also need to discretize in space (for example we use $M$ grid-points to discretize $\R^d$), then $\gamma^N$ and  $R^N$ become tensors in  $\R^{MN}$ 
 (see remark \ref{implementation} for a further decomposition of $R^N$ ). Under the assumption that $F$ and $G$ are convex,  \pref{primalEnt} is now a finite-dimensional strictly convex minimization\footnote{We will also allow $F$ to contain a nonconvex but regular term given by a convolution which we will treat in a semi-implict way as explained in paragraph \ref{nonlocal}.}.  In order to simplify the presentation, 
 we will keep the continuous in space notation. Integrals must therefore be understood as  finite sums and $x_0,..x_N$ as $M$ vectors. 

\smallskip

One can now generalise the algorithm formalized in  \cite[theorem 3.2]{chizat2016scaling}  (the case $N=1$ or two marginals) as follows. We first state without proof a classical duality result: 
\begin{prop}
The dual problem of \eqref{primalEnt} is
\begin{equation}
\label{dualEnt}
\sup_{(u_0,\cdots,u_N)}-\tilde F^\star(-u_0)-\frac{T}{N}\sum_{k=1}^{N-1}F^\star(-u_k)-G^\star(-u_N)-\int \left(\exp(\oplus_{k=0}^{N}u_k)-1\right)R^N
\end{equation}
where $\oplus_{k=0}^{N}u_k: (x_0,\cdots,x_N)\mapsto u_0(x_0)+\cdots+u_{N}(x_N)$. Strong duality holds in the sense that the minimum in \pref{primalEnt} coincides with the maximum in \pref{dualEnt}.
\end{prop}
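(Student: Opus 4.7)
The plan is to treat the statement as a textbook Fenchel--Rockafellar duality and reduce it to a chain of elementary conjugate computations. First I would introduce auxiliary variables $\rho_0,\ldots,\rho_N$ for the marginals of $\gamma^N$ and rewrite \eqref{primalEnt} as
\[
\inf_{\gamma^N,\rho_0,\ldots,\rho_N}\Big\{H(\gamma^N\vert R^N)+i_{\rho_0}(\rho_0)+\frac{T}{N}\sum_{k=1}^{N-1}F(\rho_k)+G(\rho_N)\Big\}
\]
subject to the linear coupling constraints $\rho_k=\pi^k_\#\gamma^N$ for $k=0,\ldots,N$. All the summands in the objective are convex and l.s.c.\ (the local parts of $F,G$ are convex integrals, the $\mathcal{W}_2$-continuous parts $F_2,G_2$ are assumed convex in the setting where duality is used, and the relative entropy is jointly convex and weak-$*$ l.s.c.), which is the structural assumption making duality apply.

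Next I would introduce multipliers $u_k$ (taken in $C_b(\R^d)$ with suitable decay to make the exponential integral below finite) for each constraint and form the Lagrangian
\[
L=H(\gamma^N\vert R^N)+\sum_k \Phi_k(\rho_k)-\sum_k \int u_k\,d\rho_k+\int\Big(\bigoplus_{k=0}^{N}u_k\Big)\,d\gamma^N,
\]
with $\Phi_0=i_{\rho_0}$, $\Phi_k=(T/N)F$ for $1\le k\le N-1$ and $\Phi_N=G$. Taking $\inf_{\rho_k}$ separately produces exactly the Legendre terms $-\tilde F^\star(-u_0)$, $-\frac{T}{N}F^\star(-u_k)$ and $-G^\star(-u_N)$ (after absorbing the scalar $T/N$ via $(\lambda\Phi)^\star$, or by rescaling the multipliers on the intermediate slices). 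The key step is then the conjugate of the entropy against the separable pairing $\bigoplus u_k$: with $R^N$ interpreted as a $\sigma$-finite reference measure and $\gamma^N$ as a non-negative measure with $\gamma^N\ll R^N$, pointwise minimization of $s\log s-s+1-\phi s$ in $s=d\gamma^N/dR^N$ (the Csisz\'ar form, consistent with the $\sigma$-finite convention of the footnote) yields
\[
\inf_{\gamma^N\ge 0}\Big\{H(\gamma^N\vert R^N)-\int\phi\,d\gamma^N\Big\}=-\int\big(e^{\phi}-1\big)\,dR^N,
\]
which is precisely the last term of \eqref{dualEnt} with $\phi=\bigoplus u_k$.

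Finally, strong duality (absence of gap and attainment on one side) would follow from a standard qualification argument: the primal is a convex program with linear constraints, it admits a finite value (e.g.\ by inserting the product marginal $\rho_0\otimes R^N_{1,\ldots,N}$ as a candidate, whose entropy is finite under \pref{hyprho0}), and the entropy functional provides the coercivity needed to extract a minimizer after normalization. One may then invoke either Fenchel--Rockafellar in the form stated in Villani or Brezis, or the Kantorovich-type duality arguments used in the Schr\"odinger literature (Chen--Georgiou--Pavon, L\'eonard).

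The main obstacle I expect is the delicate interplay between the $\sigma$-finiteness of $R^N$ (total mass infinite because $R_0=\mathcal{L}^d$) and the integrability requirements on the dual variables: the expression $\int(e^{\bigoplus u_k}-1)\,dR^N$ is only meaningful if $u_0$ decays sufficiently at infinity, so the natural space for $(u_0,\ldots,u_N)$ is not simply $C_b$ but rather a weighted class (encoded in the footnote by renormalizing with $V(x_0)=\pi\vert x_0\vert^2$). Managing this weight while keeping the Legendre computations of $F^\star$ and $G^\star$ intrinsic is the only subtle part; once the admissible class of potentials is fixed, the duality identity and the no-gap statement are routine.
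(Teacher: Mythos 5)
The paper states this proposition explicitly \emph{without proof} (``a classical duality result''), so there is no argument of the authors to measure yours against. Your route --- introducing the marginals as auxiliary variables, dualizing the linear coupling constraints, and conjugating the entropy and the penalties $\Phi_k$ separately via Fenchel--Rockafellar --- is the standard one and is precisely the multi-marginal version of Theorem 3.2 of Chizat et al.\ that the authors cite; it is the right way to prove the statement. Note also that the proposition is invoked \emph{after} space discretization, where $R^N$ is a finite measure on a finite grid, so the $\sigma$-finiteness and decay issues you devote your last paragraph to disappear and the whole thing reduces to finite-dimensional convex duality; the real qualification to check there is the existence of a feasible $\gamma^N$ with finite entropy at which all the $\Phi_k$ are finite (nontrivial for hard congestion or obstacles).

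Three points in your write-up do not close as written. First, the displayed identity $\inf_{\gamma\ge 0}\{H(\gamma\vert R^N)-\int\phi\,d\gamma\}=-\int(e^{\phi}-1)\,dR^N$ is false for $H$ as the paper defines it ($\int\log(d\gamma/dR)\,d\gamma$): pointwise minimization of $s\log s-\phi s$ gives $-\int e^{\phi-1}\,dR^N$. The right-hand side you want is the conjugate of the Csisz\'ar integrand $s\log s-s+1$, which is what your pointwise computation actually uses; on the feasible set (where the marginal constraint at $k=0$ forces $\vert\gamma\vert=1$) the two primal functionals differ by the constant $\vert R^N\vert-1$, so either the primal entropy must be read with the Csisz\'ar normalization or the asserted equality of minimum and maximum holds only up to this constant --- you must say which convention you adopt. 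Second, your Slater-type candidate $\rho_0\otimes R^N_{1,\ldots,N}$ is not a probability measure (its second factor has infinite, or at best non-unit, mass); a workable candidate is the heat-flow coupling $\rho_0(dx_0)\prod_k P_{T/N}(x_k-x_{k-1})\,dx_k$, whose relative entropy to $R^N$ equals $\Ent(\rho_0)<+\infty$ under \pref{hyprho0}, though finiteness of $F$ at its marginals still has to be assumed. Third, you flag but do not resolve the $T/N$ rescaling: with the pairing $\int u_k\,d\rho_k$ appearing inside the exponential term, the conjugate contribution is $-(\tfrac{T}{N}F)^\star(-u_k)=-\tfrac{T}{N}F^\star(-\tfrac{N}{T}u_k)$ rather than $-\tfrac{T}{N}F^\star(-u_k)$; this convention issue is inherited from the statement itself, but a proof has to fix it explicitly.
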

Denoting by $u_k^\star$ and $\gamma^\star$ the optimal solutions to \eqref{dualEnt} and \eqref{primalEnt} respectively, it follows that the unique solution to \eqref{primalEnt} has the form
\begin{equation}
\gamma^\star(x_0,\cdots,x_N):=\left (\otimes_{k=0}^{N} e^{u_k^\star(x_k)}\right) R^N(x_0,\cdots,x_N).
\end{equation}

The  algorithm is obtained by relaxations of the maximizations on the dual problem \eqref{dualEnt}. 
 We get the iterative method computing a sequence of potentials (denoted with the superscripts $.^{(n)}$)~:
 \\
 given $N+1$ vectors $u_k^{(0)}$ with $k=0,\cdots,N$, then the update at step $n$ is defined as
 \begin{equation}
 \label{iteratesDual}
 \displaystyle
 \begin{cases}
 u_k^{(n)}:=\argmax_u -\tilde F^\star(-u)-\int \exp(u)I^u_k\mbox{d}x_1\cdots\mbox{d}x_N\;    \quad \text{for}\;k=0,\\[10pt]

 u_k^{(n)}:=\argmax_u -\frac{T}{N}F^\star(-u)-\int \exp(u)I^u_k\mbox{d}x_0\cdots\mbox{d} x_{k-1} \mbox{d} x_{k+1} \cdots\mbox{d}x_N\;   \text{ for} \;k=1,\cdots,N-1,\\[10pt]
 
  u_k^{(n)}:=\argmax_u -G^\star(-u)-\int \exp(u)I^u_k\mbox{d}x_0\cdots\mbox{d}x_{N-1}\;   \quad \text{for} \;k=N,
   \end{cases}
 \end{equation}
 where 
 \[I^u_k:=\exp(\oplus_{i=0}^{k-1}u^{(n)}_i)\exp(\oplus_{i=k+1}^{N}u^{(n-1)}_i)R^N.\]

\begin{rem}[Examples of Energies ]
For many interesting energies $F$ and $G$,  the relaxed maximizations  can  be computed pointwise in space  
and analytically.   We  list here a few which are tested in the numerical section. 

\label{proxEx}
\begin{description}[align=left]
\item {\em Marginal constraint : } In this case the functional $F$ takes the form \\
$F(\rho)=i_{\rho_0}(\rho)$.

\item {\em Hard congestion : } For  the hard congestion one has \[F(\rho)=\begin{cases} 0\;\text{if}\; \rho \le \bar \rho,\\ +\infty\;\text{otherwise}.\end{cases}\]

\item {\em Potentials and  Obstacles}
In this case $F(\rho)  =  \int V(t,x)\, \rho(x) \, dx  $ is linear in $\rho$.   \\
  Obstacles  correspond to 
\[V(t,x) =\begin{cases} +\infty \;\text{if}\; x\in \Omega(t), \\  0 \;\text{otherwise}\end{cases}\]
where $t \mapsto \Omega(t)$ represent one or more  (possibly moving with time)  bounded domains.
\end{description}
\end{rem}

 \begin{rem}[Implementation] 
\label{implementation}
The iterations of Sinkhorn might seem tedious at a first glance because of the integration against $R^N$, but due to the special form of $R^N$ \eqref{decomposed}, these are just series of convolution with the kernel $P_t$. Moreover, the heat kernel can be further decomposed along the dimension as follows 
\[P_t(z)=\prod_{j=1}^d p_t(z_j)\] 
where $p_t(z_j)$ is the heat kernel in dimension one.  
This implies that instead of storing a matrix $P_t\in\R^{M\times M}$ (which is already better than the full tensor $R^N$) one can just store $d$ small matrices belonging to $\R^{\sqrt[d]{M}\times\sqrt[d]{M}}$.
\end{rem}

\subsection{Adding a viscosity parameter}


It is straightforward to extend the  theory and the numerical method to a slightly more general model with a viscosity parameter $\eps$. The Mean Field Game system
(\ref{MFGsystem2}) now  takes the form~:
\begin{equation}
\label{MFGsystem2eps}
\left\{\begin{array}{lll}
 -\partial_t u -\frac{\eps}{2} \Delta u  + \frac{1}{2} \vert \nabla u \vert^2= f[\rho_t], \; &(t,x)\in (0,T)\times \R^d\\
\partial_t \rho - \frac{\eps}{2} \Delta \rho -\dive(\rho \nabla u)=0, \; &(t,x)\in (0,T)\times \R^d \\
\rho{_{\vert}}_{t=0}=\rho_0, \; u{_{\vert}}_{t=T}= g[\rho_T].
\end{array}\right.\end{equation}
The Lagrangian formulation \eqref{minentrmfg} we have proposed becomes
 \begin{equation}\label{minentrmfgeps}
\inf \Big\{ H(Q\vert R_\eps)+ \CC((Q_t)_{t\in [0,T]}) \; : \; Q\in \PP(\Omega), \; Q_0=\rho_0\Big\}. 
\end{equation}
where $R_\eps$ is the reversible Wiener measure induced by a Brownian motion with variance $\eps$.
In particular, if we discretize the problem in time, we have that the reference measure $R_\eps^N$ can be still decomposed by using the heat kernel $P_{\eps t}(z)$.
Notice that we can still use the algorithm we have introduced in the previous section, but the performance, in terms of iterations to converge, will be affected by small values of $\eps$. At least formally, when the viscosity is small, \pref{minentrmfgeps} is an approximation of the following Lagrangian formulation of first-order variational mean-field games (see  \cite{benamou2017variational} for more details about this formulation)
 \begin{equation}\label{minimfg1stLag}
\inf \left\{ \KK(Q) +  \CC((Q_t)_{t\in [0,T]})  \; : \; Q\in\PP(\Omega),\;Q_0=\rho_0 \right\},
\end{equation}
where 
\begin{equation}\label{defdeK}
\KK(Q):=  \frac{1}{2}  \int_{\Omega}  \int_0^T \vert \dot \omega(t)\vert^2  \mbox{d}t \mbox{d} Q(\omega).
\end{equation}
This also  implies that we can use the Sinkhorn algorithm, with small $\eps$, in order to approximate the solution to first-order MFGs.
\begin{rem}[$\Gamma-$convergence]
We do not prove here $\Gamma-$convergence results as $\eps\rightarrow 0$, for the time discretization, it is indeed, at least on the flat torus, a straightforward generalization of a similar result obtained in \cite{benamou2017generalized} for the so-called Bredinger problem.
\end{rem}

\begin{rem}[Numerical limitations]
 Denoting by $h$ and $dt$  respectively  the space and time discretization steps, the heat kernel (\ref{heatker}) used in the Sinkhorn iterations scales like $e^{- \frac{h^2}{dt \, \eps}} $.
It induces  a limit to the stability of the numerical method  when letting $\eps$ go to 0 or increasing, respectively decreasing,  $h$ and $dt$. We refer to  \cite{schmitzer16} for a review of  several approaches to mitigate this problem in the 2-marginal case. 
\end{rem}

\subsection{Numerical Results}

\subsubsection{Planning MFG}

We illustrate  the effect of varying $\eps$  with the planning MFG problem on the torus. This correspond to $F:=0$ and $G := i_{\rho_1} $ (see remark \ref{proxEx}), the characteristic function 
which forces, as for the initial density,  the final density to match exactly a prescribed density. This problem is equivalent to the dynamic
formulation of  optimal transport \cite{benamou2000computational} which is a first-order planning MFG with the additional  expected diffusion effect linked to the second-order term. 

On figure \ref{fig:perio1},   for $\eps = 1$, one can see the effect of the diffusion and that mass will travel across the periodic boundary. On figures \ref{fig:perio2} ($\eps = 0.1$) and \ref{fig:perio3} ($\eps = 0.01$),  the transport gets  closer to the optimal transport/first-order MFG. 

\newcommand{\noObstacle}[2]{\includegraphics[width=.35\linewidth]{figures/cemracs#1-#2}}

\newcommand{\perio}[2]{\includegraphics[width=.35\linewidth]{figures/densperio#1#2}}
\newcommand{\operio}[2]{\includegraphics[width=.35\linewidth]{figures/densoperio#1-#2}}

\newcommand{\Obstacle}[2]{\includegraphics[width=.35\linewidth]{figures/Obstacle_hc_eps_#1_density_at_time_#2}}

\begin{figure}[h!]
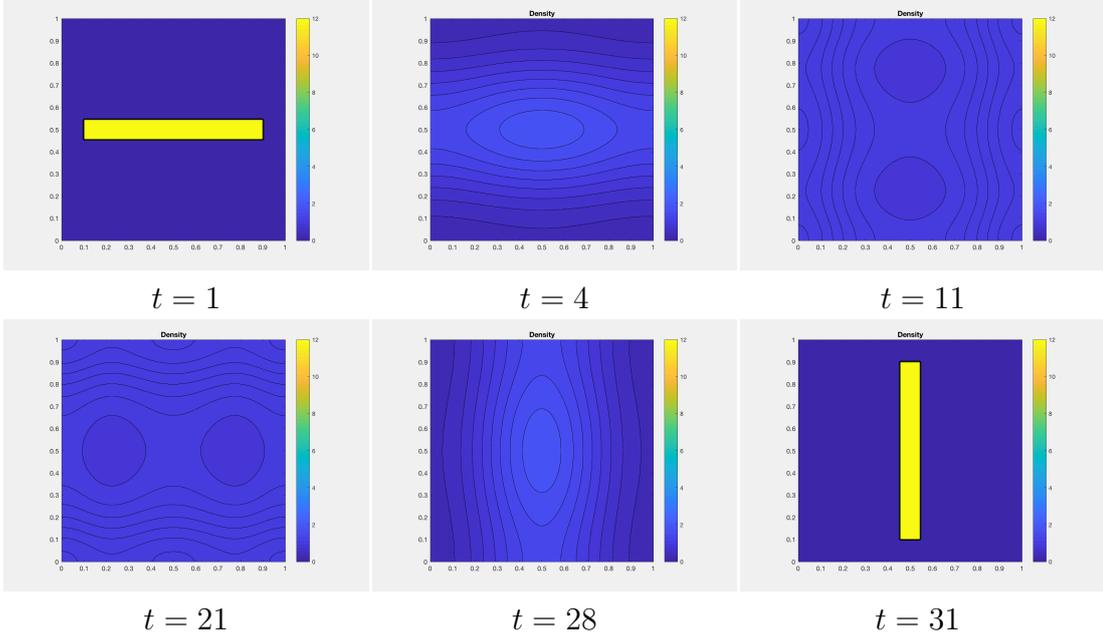


	\centering
       \TabThree{
       \perio{1}{0}&
       \perio{1}{1}&
       \perio{1}{2} \\
         $t=1$ & $t=4$ & $t=11$\\
      \perio{1}{3}&
       \perio{1}{4}&
       \perio{1}{5} \\
     
       $t=21$ &  $t= 28$ & $t=31$ 
       }
\caption{Planning MFG on the torus,    Densities at different time steps. $\eps=1$ and $31$ time steps.  }
\label{fig:perio1}
\end{figure}

\begin{figure}[h!]

	\centering
       \TabThree{
       \perio{2}{0}&
       \perio{2}{1}&
       \perio{2}{2} \\
         $t=2$ & $t=4$ & $t=11$\\
      \perio{2}{3}&
       \perio{2}{4}&
       \perio{2}{5} \\
     
       $t=21$ &  $t= 28$ & $t=30$ 
       }
\caption{Planning MFG on the torus, fixed initial and final densities similar to figure \ref{fig:perio1},   Densities at different time steps. $\eps=0.1$ and $31$ time steps.  }
\label{fig:perio2}
\end{figure}

\begin{figure}[h!]

	\centering
       \TabThree{
       \perio{3}{0}&
       \perio{3}{1}&
       \perio{3}{2} \\
         $t=2$ & $t=6$ & $t=13$\\
      \perio{3}{3}&
       \perio{3}{4}&
       \perio{3}{5} \\
     
       $t=17$ &  $t= 26$ & $t=30$ 
       }
\caption{Planning MFG on the torus, fixed initial and final densities similar to figure \ref{fig:perio1},   Densities at different time steps. $\eps=0.01$ and $31$ time steps.  }
\label{fig:perio3}
\end{figure}

\subsubsection{Moving Obstacles}

We now  use the same planning MFG setting but 
add multiple obstacles moving with time (see remark \ref{proxEx}).  
The boundaries of the obstacles are the white circles on the snapshots displayed in 
figures \ref{fig:operio1}-\ref{fig:operio3}.   The density is zero inside as agents pay an infinite cost to be there.

\begin{figure}[h!]

	\centering
       \TabThree{
       \operio{1}{0}&
       \operio{1}{1}&
       \operio{1}{2} \\
         $t=0$ & $t=4$ & $t=8$\\
      \operio{1}{3}&
       \operio{1}{4}&
       \operio{1}{5} \\
     
       $t=12$ &  $t= 16 $ & $t=20$  \\
             \operio{1}{6}&
       \operio{1}{7}&
       \operio{1}{8} \\
     
       $t=24$ &  $t= 28 $ & $t=32$ 

       }
\caption{Planning MFG on the torus with moving obstacles, fixed initial and final densities similar to figure \ref{fig:perio1},   Densities at different time steps. $\eps=1$ and $32$ time steps.  }
\label{fig:operio1}
\end{figure}

\begin{figure}[h!]

	\centering
       \TabThree{
       \operio{2}{0}&
       \operio{2}{1}&
       \operio{2}{2} \\
         $t=0$ & $t=4$ & $t=8$\\
      \operio{2}{3}&
       \operio{2}{4}&
       \operio{2}{5} \\
     
       $t=12$ &  $t= 16 $ & $t=20$  \\
             \operio{2}{6}&
       \operio{2}{7}&
       \operio{2}{8} \\
     
       $t=24$ &  $t= 28 $ & $t=32$ 

       }
\caption{Planning MFG on the torus with moving obstacles, fixed initial and final densities similar to figure \ref{fig:perio1},   Densities at different time steps. $\eps=0.1$ and $32$ time steps.  }
\label{fig:operio2}
\end{figure}

\begin{figure}[h!]

	\centering
       \TabThree{
       \operio{3}{0}&
       \operio{3}{1}&
       \operio{3}{2} \\
         $t=0$ & $t=4$ & $t=8$\\
      \operio{3}{3}&
       \operio{3}{4}&
       \operio{3}{5} \\
     
       $t=12$ &  $t= 16 $ & $t=20$  \\
             \operio{3}{6}&
       \operio{3}{7}&
       \operio{3}{8} \\
     
       $t=24$ &  $t= 28 $ & $t=32$ 

       }
\caption{Planning MFG on the torus with moving obstacles,  fixed initial and final densities similar to figure \ref{fig:perio1},   Densities at different time steps. $\eps=0.01$ and $32$ time steps.  }
\label{fig:operio3}
\end{figure}

\subsubsection{Non local interactions}\label{nonlocal}

In this section, we investigate a model for which $F$ is not convex in $\rho$. More precisely, the overall running cost will be of the form
  $F = F_1+F_2$, with  $F_1$ a hard congestion term preventing concentration of mass:
  \[F_1(\rho)=\begin{cases} 0\;\text{if}\; \rho \le  1, \\ +\infty\;\text{otherwise}. \end{cases}\] 
As for the $F_2$ term, it is given by a non local  interaction functional:
 \[ F_2(\rho) =   - \frac{1}{2} \int   \int  K(x-y) \rho( y)  \,  \mbox{d} y  \; \rho(x) \, \mbox{d} x.   \]
 If the kernel is non symmetric ($ K(x-y) \ne K(y-x))$ the mean field game is not variational, we only solve for a 
 fixed point of the system.  \\
 
The first kernel in  (see first frame figures \ref{PotepsSmall})   in polar coordinates,   is the tensor product of Gaussians in radius and direction. 
It is non symmetric and (because of the minus sign above) will favor streching the mass in the mean direction (here $45$ deg.). 
The initial density is prescribed and the final density is free. 
The kinetic energy cost penalizes displacement and the game seems to give a smooth deformation to a stationary  
optimal shape.  The periodic boundary conditions are not active as we use a low diffusion parameter $\eps$. \\

The second proposed kernel in figure  \ref{PotepsSmall2}  is the symmetrized version of the first.  
We now get a perfectly symmetric evolution toward a stationary shape.  \\

From an algorithmic point of view, we are out of the domain of application of Sinkhorn (\ref{iteratesDual}).  In order to solve the problem,  we adopt a semi-implicit approach (see \cite{carlierlaborde}). At each Sinkhorn  iteration $n+1$ we relax the non local functional as a potential linear cost using  the 
 density of the previous iteration:
  
 $f_2^{(n)}(x) =     - \int   K(x-y) \rho^{(n)} (y) \, dy$.

\newcommand{\Potpot}[2]{\includegraphics[width=.4\linewidth]{figures/P7#1-#2}}
\newcommand{\PotentialPot}[1]{\includegraphics[width=.4\linewidth]{figures/P7#1}}
\begin{figure}[h!]
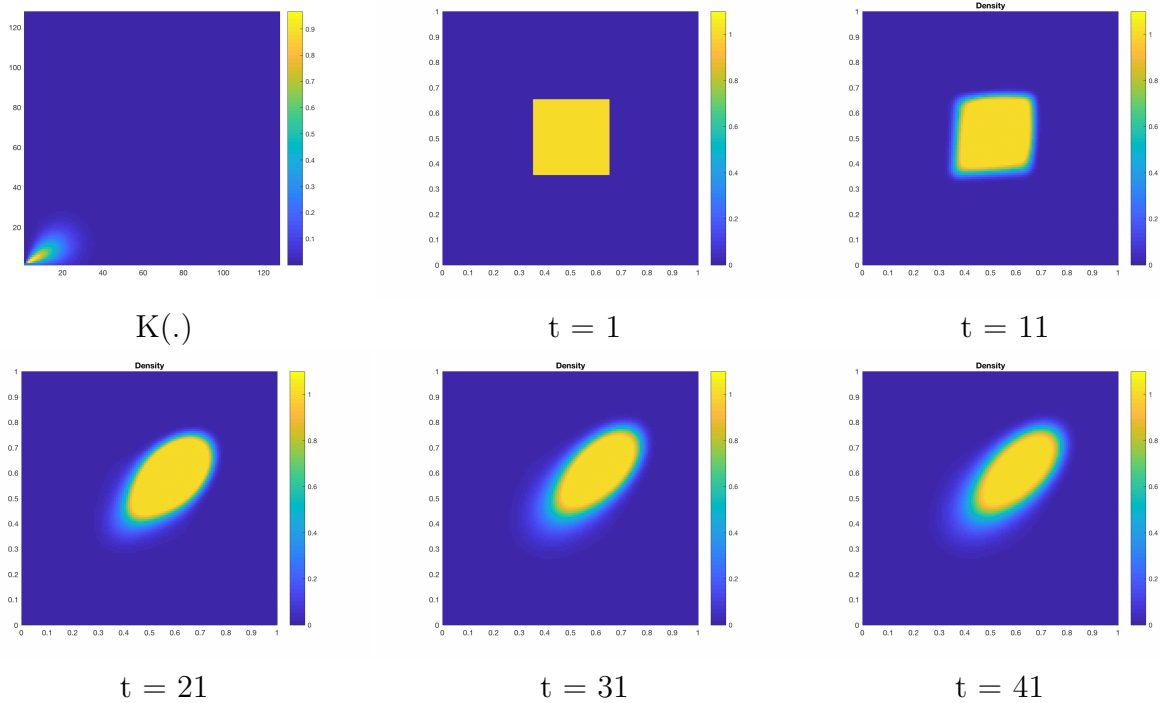


	\centering
       \TabThree{
       \PotentialPot{pot3} & \Potpot{4}{0} & \Potpot{4}{2} \\
       K(.)   & t = 1 & t =  11  \\
      \Potpot{4}{16} &       \Potpot{4}{30} &      \Potpot{4}{31}  \\
      t = 21 & t = 31  & t =  41 
       }
 \caption{Non local interaction. Top left : Kernel  $K$. Densities at different time steps. $\eps = 10^{-3} $.  } 
  \label{PotepsSmall}
\end{figure}

\begin{figure}[h!]
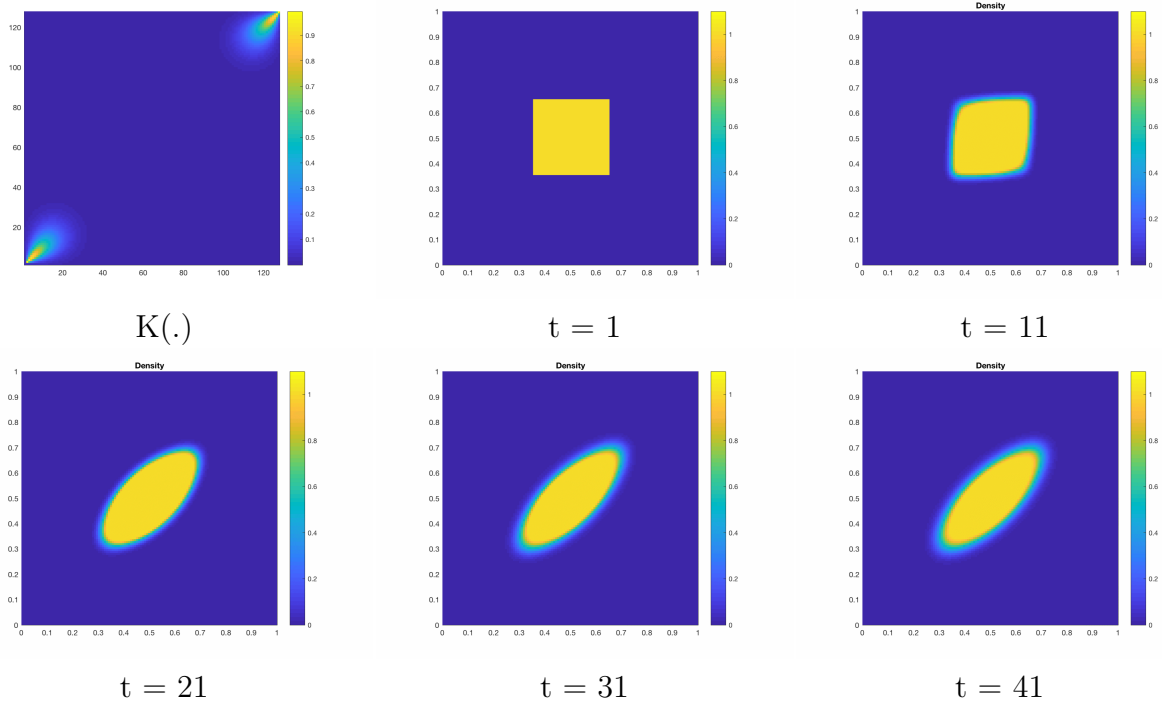


	\centering
       \TabThree{
       \PotentialPot{pot1}
     &\Potpot{3}{0}
    &\Potpot{3}{2}\\
        K(.)   & t = 1 & t =  11  \\
      \Potpot{3}{16} 
    &\Potpot{3}{30} &\Potpot{3}{31} \\
 t = 21 & t = 31  & t =  41 
       }
 \caption{Non local interaction. Top left : potential $K$. Densities at different time steps. $\eps = 10^{-3} $.  }
  \label{PotepsSmall2}
\end{figure}

\smallskip


\bibliographystyle{plain}

\bibliography{bibli}

\end{document}